\newtheorem{theorem}{Theorem}
\newtheorem{lemma}[theorem]{Lemma}
\newtheorem{corollary}[theorem]{Corollary}
\newtheorem{proposition}[theorem]{Proposition}
\font\sc=rsfs10
\newcommand{\cC}{\sc\mbox{C}\hspace{1.0pt}}
\newcommand{\cI}{\sc\mbox{I}\hspace{1.0pt}}
\newcommand{\cJ}{\sc\mbox{J}\hspace{1.0pt}}
\newcommand{\cS}{\sc\mbox{S}\hspace{1.0pt}}
\font\scc=rsfs7
\newcommand{\ccS}{\scc\mbox{S}\hspace{1.0pt}}
\newcommand{\Hom}{\operatorname{Hom}}
\begin{document}
\title[$2$-representations of small quotients of 
Soergel bimodules]
{$2$-representations of small quotients\\ 
of Soergel bimodules in infinite types}

\author[H.~Ko and V.~Mazorchuk]
{Hankyung Ko and Volodymyr Mazorchuk}

\begin{abstract}
We determine for which Coxeter types the associated
small quotient of the $2$-category of Soergel bimodules is
finitary and, for such a small quotient, classify the simple 
transitive $2$-representations (sometimes under the additional
assumption of gradability). We also describe the underlying
categories of the simple transitive $2$-representations. 
For the small quotients of general Coxeter types, we give 
a description for the cell $2$-representations.
\end{abstract}

\maketitle

\section{Introduction and description of the results}\label{s1}

In this paper we fix the ground field $\mathbb{C}$ of complex numbers.
Let $(W,S)$ be a finitely generated Coxeter system and $\mathfrak{h}$ a reflections faithful
$W$-mo\-du\-le in the sense of \cite[Definition~1.5]{So2}. 
With this datum one associates a $2$-category
$\cS=\cS_{W,S,\mathfrak{h}}$ of Soergel bimodules, see \cite{So2}.
By \cite{So1} (for finite Weyl groups) and \cite{EW}
(in the general case), the Grothendieck decategorification of $\cS$
is isomorphic to the Hecke algebra $\mathbf{H}=\mathbf{H}_{W,S}$ of $W$.

The paper \cite{KMMZ} studies a certain quotient $\underline{\cS}$
of $\cS$, called {\em the small quotient}, in the case when the 
Coxeter system $(W,S)$ is finite. The main result of \cite{KMMZ}
is a classification of all simple transitive $2$-representations of
$\underline{\cS}$ in all finite Coxeter types but $I_2(12)$, $I_2(18)$ 
and $I_2(30)$. Under the additional assumption of gradability, 
classification of simple transitive $2$-representations
in these three exceptional cases was completed in \cite{MT}.

The setup of finite Coxeter systems, considered in \cite{KMMZ}, is
motivated by the fact that, in this setup, the $2$-category $\cS$
(and hence also the $2$-category $\underline{\cS}$) can be defined
over the coinvariant algebra of $W$ and is {\em finitary}
in the sense of \cite{MM1}. At the same time, for infinite Coxeter types,
it might still happen that the $2$-category $\underline{\cS}$ is finitary
despite the fact that the $2$-category $\cS$ no longer is.

In this note, we determine for which Coxeter systems the corresponding
small quotient of the $2$-category of Soergel bimodules is finitary, and,
in that case, classify all simple transitive $2$-representations
of this small quotient. Our argument is an application of the main result 
of \cite{MMMZ} which reduces our classification problem to the classification
results of \cite{KMMZ,MT}. Consequently, for some special cases, we need
to work under the additional assumption of gradability. We also
determine the quiver and relations for the underlying categories
of our $2$-representations. In the Coxeter type where the small quotient is not finitary, 
we do not have a classification for the simple transitive $2$-representations but still 
give the quiver description for a distinguished subclass of it, namely the cell $2$-representations.
 It turns out that, in all cases, the underlying
category corresponds to the zig-zag algebra (cf. \cite{HK,Du}) of a certain 
combinatorially described tree.

The paper is organized as follows: Section~\ref{s2} studies Kazhdan-Lusztig 
cell combinatorics of the small Kazhdan-Lusztig cell of an arbitrary
Coxeter system. Section~\ref{s3} is devoted to classification of
simple transitive $2$-representations of finitary small quotients of Soergel bimodules.
In Section~\ref{s4} one finds a description of the quiver and relations
for the underlying category of these $2$-representations.
\vspace{0.5cm}

{\bf Acknowledgements.}
The first author thanks Uppsala University for the hospitality during her visit where much of the work was done.
The second author is supported by the Swedish Research Council,
G{\"o}ran Gustafsson Foundation and Vergstiftelsen. We also thank Professor George Lusztig for
bringing the reference \cite{Lu} to our attention.

\section{Combinatorics of the small Kazhdan-Lusztig cell}\label{s2}

\subsection{Kazhdan-Lusztig cells}\label{s2.1}

We consider the {\em Hecke algebra} $\mathbf{H}=\mathbf{H}_{W,S}$ of $W$ in the normalization of
\cite{So2}. It has the {\em standard basis} $\{H_{w}:w\in W\}$ and the 
{\em Kazhdan-Lusztig (KL) basis} $\{\underline{H}_w:w\in W\}$, cf \cite{KL}.

We define the {\em left preorder} $\leq_L$ on $W$ as follows: for $x,y\in W$, we have $x\leq_L y$
provided that there is $w\in W$ such that $\underline{H}_y$ appears with a non-zero coefficient
when expanding $\underline{H}_w\underline{H}_x$ in the KL basis. Equivalence classes for
this preorder are called {\em left cells}. The fact that $x,y\in W$ belong to the same left 
cell is written $x\sim_L y$. Similarly one defines the {\em right preorder} $\leq_R$
and the right cells $\sim_R$ using $\underline{H}_x\underline{H}_w$, and also 
the {\em two-sided preorder} $\leq_J$ and the right cells $\sim_J$ using 
$\underline{H}_w\underline{H}_x\underline{H}_{w'}$.


\subsection{The small cell}\label{s2.2}

The following statement is fundamental for our results.
It can be easily obtained from \cite[Proposition~3.8]{Lu},
see also \cite{Do,Lu1,Lu2,Lu3}. A detailed
argument from \cite[Lemma~3 and Proposition~4]{KMMZ}, 
which is formally written in the setup of finite Coxeter groups, works in the general case.

\begin{proposition}\label{prop1}
Let $(W,S)$ be a finitely generated indecomposable Coxeter system.
 
\begin{enumerate}[$($i$)$]
\item\label{prop1.0} All simple reflections $s\in S$ belong to the same two-sided cell,
called the {\em small cell} and denoted $\mathcal{J}$.
\item\label{prop1.1} The map $\mathcal{L}\mapsto \mathcal{L}\cap S$ is a bijection between the set of
all left cells in $\mathcal{J}$ and $S$.
\item\label{prop1.3} The map $\mathcal{R}\mapsto \mathcal{R}\cap S$ is a bijection between the set of
all right cells in $\mathcal{J}$ and $S$.
\item\label{prop1.5} An element $e\neq w\in W$ belongs to $\mathcal{J}$ if and only if $w$
has a unique reduced expression. 
\item\label{prop1.7} If $\mathcal{L}$ is a left cell in  $\mathcal{J}$ with $s=\mathcal{L}\cap S$
and $\mathcal{R}$ is a right cell in  $\mathcal{J}$ with $t=\mathcal{R}\cap S$, then 
$\mathcal{L}\cap \mathcal{R}$ consists of all those element $w\in W$ with unique reduced expression
for which this unique expression is of the form $t\dots s$.
\end{enumerate}
\end{proposition}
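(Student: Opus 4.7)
The plan is to derive all five statements from an explicit description of $\underline{H}_w$ whenever $w\in W$ has a unique reduced expression. The base computation, provable by induction on $\ell(w)$ using the standard recursion for $\underline{H}_s\cdot \underline{H}_v$, is that if $w=s_{i_1}s_{i_2}\cdots s_{i_k}$ is the unique reduced expression of $w$ then the Kazhdan-Lusztig polynomial $h_{x,w}$ equals $v^{\ell(w)-\ell(x)}$ for every $x\le w$ in Bruhat order. The induction works precisely because uniqueness of the reduced expression of $w$ forbids the application of any braid move, forcing the $\mu$-correction term in $\underline{H}_s\underline{H}_{sw}$ to vanish. In particular,
\[
\underline{H}_{s_{i_1}}\underline{H}_{s_{i_2}}\cdots \underline{H}_{s_{i_k}}\;=\;\underline{H}_w\;+\;\sum_{y<w}c_y\,\underline{H}_y,
\]
where each $y$ appearing on the right again has its own unique reduced expression.

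Parts (i) and the ``only if'' direction of (iv) follow directly from this. With $u=s_{i_1}\cdots s_{i_{k-1}}$ still of unique reduced expression, the displayed identity applied one step shorter yields $\underline{H}_u\underline{H}_{s_{i_k}}=\underline{H}_w+(\text{lower})$, hence $w\le_L s_{i_k}$, and symmetrically $w\le_R s_{i_1}$. The reverse $\le_L$- and $\le_R$-inequalities are obtained by multiplying $\underline{H}_w$ by the Kazhdan-Lusztig element attached to the appropriate reversed prefix and iteratively applying the reduction rules of the KL basis; the net effect is that $\underline{H}_{s_{i_k}}$ (respectively $\underline{H}_{s_{i_1}}$) does appear in the expansion. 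Varying the first letter of $w$ and using that the Coxeter graph of $(W,S)$ is connected by indecomposability, these pairwise left/right equivalences propagate to the two-sided equivalence of all simple reflections, proving (i) and identifying every element with unique reduced expression as a member of the common two-sided cell $\mathcal{J}$.

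The main obstacle is the converse direction of (iv): showing that any $e\ne w$ with two distinct reduced expressions lies strictly above $\mathcal{J}$ in $\le_J$. By Matsumoto's theorem, such $w$ admits a reduced expression containing a contiguous braid subword $\underbrace{sts\cdots}_{m_{st}\ \text{letters}}$ for some $s,t\in S$ with $m_{st}<\infty$. One then writes $\underline{H}_w=\underline{H}_{u_1}\underline{H}_{w_0^{s,t}}\underline{H}_{u_2}$ (up to lower terms), where $w_0^{s,t}$ is the longest element of the finite dihedral parabolic $\langle s,t\rangle$, and invokes the explicitly known cell structure of the rank-$2$ dihedral Hecke algebra (cf.~\cite{Lu1,Lu2}): there $w_0^{s,t}$ sits in a two-sided cell strictly above the cell of $s$, so the same holds for $w$ in the ambient $\mathbf{H}$. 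This step is delicate because the argument must rely solely on local rank-$2$ data, no global finiteness of $W$ being available.

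Parts (ii), (iii) and (v) are then essentially formal consequences. The first paragraph already shows that $\mathcal{L}\cap S=\{s_{i_k}\}$ for the left cell $\mathcal{L}$ of $w$ and $\mathcal{R}\cap S=\{s_{i_1}\}$ for its right cell $\mathcal{R}$; well-definedness and bijectivity of the maps in (ii) and (iii) reduce to the single remaining observation that distinct simple reflections lie in distinct left (respectively right) cells inside $\mathcal{J}$, which is again a rank-$2$ check using the dihedral cell data. Combining (ii) and (iii) gives (v): $w\in\mathcal{L}\cap\mathcal{R}$ with $s=\mathcal{L}\cap S$ and $t=\mathcal{R}\cap S$ precisely means the unique reduced expression of $w$ both ends in $s$ and begins with $t$, i.e.\ has the form $t\dots s$.
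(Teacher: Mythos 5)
The paper does not give a proof of Proposition~\ref{prop1} at all: it cites Lemma~3 and Proposition~4 of \cite{KMMZ} and asserts that the argument there, combined with \cite{Do,Lu1,Lu2,Lu3}, carries over verbatim from the finite setting to arbitrary finitely generated indecomposable Coxeter systems. So any self-contained proof you write is a priori a different route from the paper's.

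That said, your proposal has concrete problems. First, the claim that ``uniqueness of the reduced expression of $w$ forbids the application of any braid move, forcing the $\mu$-correction term in $\underline{H}_s\underline{H}_{sw}$ to vanish'' is false. Already in the infinite dihedral group (or in type $B_2$), the element $w=sts$ has a unique reduced expression, yet $\underline{H}_s\underline{H}_{ts}=\underline{H}_{sts}+\mu(s,ts)\underline{H}_s=\underline{H}_{sts}+\underline{H}_s$: the $\mu$-term does not vanish. The formula $h_{x,w}=v^{\ell(w)-\ell(x)}$ for $w$ with unique reduced expression is nonetheless correct, but your induction needs to account for the nonvanishing $\mu$-terms; as written, the mechanism you cite does not establish the base computation.

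Second, the step ``the reverse $\le_L$- and $\le_R$-inequalities are obtained by multiplying $\underline{H}_w$ by the Kazhdan-Lusztig element attached to the appropriate reversed prefix'' is genuinely nontrivial and is left as a gesture. To show that $\underline{H}_{s_{i_k}}$ actually survives in the relevant expansion one needs positivity of the structure constants of the KL basis, which in the infinite (non-crystallographic) setting rests on the Elias--Williamson theorem \cite{EW}. This should be named as the input; without it there is no reason a priori that cancellations do not destroy the desired term.

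Third, and most seriously, the converse direction of (iv) has a real gap. You argue that if $w$ has two distinct reduced expressions then (via Matsumoto) some braid subword $w_0^{s,t}$ appears, that $s <_J w_0^{s,t}$ inside the dihedral parabolic, and that ``the same holds for $w$ in the ambient $\mathbf{H}$.'' The last assertion is exactly the problematic one: from $x\le_J y$ in $W_I$ one trivially gets $x\le_J y$ in $W$, but \emph{strictness} of a cell inequality does not automatically transfer along a parabolic inclusion; $x$ and $y$ could in principle become $J$-equivalent in the larger group. Ruling this out is essentially the content of (iv), so the argument is circular as stated. One needs an independent input (e.g.\ properties of Lusztig's $a$-function, or the degree bounds coming from positivity, as in the approach of \cite{KMMZ} and \cite{Lu1,Lu2,Lu3}) to show that the two-sided cell of $s$ cannot absorb $w_0^{s,t}$.

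Parts (ii), (iii), (v) are indeed formal once (i) and (iv) and the ``distinct simples in distinct left cells'' observation are in place, and that final observation is again a rank-$2$ check plus the parabolic-transfer issue above. In short: the overall architecture is reasonable and genuinely different from the paper's (which delegates the proof to \cite{KMMZ}), but the base computation's justification is incorrect, the reverse cell inequalities need the positivity theorem explicitly invoked, and the parabolic step in (iv) is a real gap that needs to be filled with the $a$-function or equivalent technology.
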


For $s\in S$, we denote by $\mathcal{L}_s$ and $\mathcal{R}_s$ the left and the right cells 
containing $s$, respectively.

\subsection{Indecomposable Coxeter systems with finite small cells}\label{s2.3}

Let $(W,S)$ be a finitely generated indecomposable Coxeter system and $\Gamma=(\Gamma_0,\Gamma_1,m)$
is the associated Coxeter-Dynkin diagram. Here $\Gamma_0=S$ and $m:\Gamma_1\to\{3,4,\dots,\infty\}$
is the labeling function of the Coxeter presentation of $W$, that is, 
for any $\{s,t\}\in \Gamma_1$, we have  $(st)^{m_{s,t}}$ in the Coxeter presentation of $W$.
As usual, for $s,t\in S$, we have $\{s,t\}\not\in \Gamma_1$ if and only if $st=ts$.
We also omit the label $3$ and refer to an edge with label $3$ as {\em unlabeled}.

Indecomposability of $(W,S)$ is equivalent to the condition that $\Gamma$ is connected.
The following statement can be found in \cite[Proposition~3.8(h)]{Lu} without proof,
we include a proof for completeness.

\begin{proposition}\label{propfinite}
Let $(W,S)$ be a finitely generated indecomposable Coxeter system.
Then the small cell $\mathcal{J}$ is finite if and only if the following conditions are satisfied:
\begin{enumerate}[$($a$)$]
\item\label{propfinite.1} $\Gamma$ is a tree.
\item\label{propfinite.2} $\Gamma$ has at most one labeled vertex.
\item\label{propfinite.3} All labels of $\Gamma$ are finite.
\end{enumerate}
\end{proposition}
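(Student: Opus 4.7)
The plan is to reduce the finiteness of $\mathcal{J}$ to a combinatorial count of walks in $\Gamma$ and then check each condition. By Proposition~\ref{prop1}(iv), the non-identity elements of $\mathcal{J}$ are precisely those with a unique reduced expression. By Matsumoto's theorem, this is equivalent to saying that no braid or commutation move applies to any reduced word $s_{i_1}\cdots s_{i_k}$ for $w$; in other words, every pair of consecutive letters forms an edge of $\Gamma$ (so the word is a walk in $\Gamma$) and no substring of the form $sts\cdots$ of length $m_{s,t}$ occurs for any edge $\{s,t\}$. Conversely, by Tits' solution of the word problem, any word satisfying these two conditions is automatically reduced. Hence $\mathcal{J}$ is in bijection with the set of finite non-empty walks in $\Gamma$ avoiding all such alternations, and the claim reduces to deciding when this set is finite.

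For sufficiency, suppose (a)--(c) hold, and let $e_0=\{s,t\}$ be the unique labeled edge (if any), with $m=m_{s,t}$. A backtrack $\ldots u v u \ldots$ at some position of a walk is a length-$3$ alternation on the edge $\{u,v\}$, and so is forbidden unless $\{u,v\}$ is labeled, i.e., $\{u,v\}=e_0$. Since a non-backtracking walk in a tree is a simple path, every admissible walk decomposes as a simple path into an endpoint of $e_0$, an alternation block on $e_0$ of length at most $m-1$, and a simple path out; a return to $e_0$ would force a second backtrack on an unlabeled edge and is therefore impossible. The walk length is thus bounded by $2\,\mathrm{diam}(\Gamma)+m$, leaving only finitely many such walks.

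For necessity, we construct infinite admissible walks whenever one of (a)--(c) fails. If $\Gamma$ contains a cycle $u_1 u_2\cdots u_\ell u_1$ with $\ell\ge 3$, then $(u_1\cdots u_\ell)^N$ is admissible, since every substring of length at least $3$ involves three distinct letters and so is not an alternation. If some edge $\{s,t\}$ has $m_{s,t}=\infty$, the alternating walk $stst\cdots$ of arbitrary length is admissible. If $\Gamma$ is a tree with two distinct labeled edges $e_1,e_2$, we build a bouncing walk: starting at the outer endpoint of $e_1$, travel along the unique tree path to $e_2$, do one length-$3$ backtrack at the outer endpoint of $e_2$, return along the path, do a length-$3$ backtrack at the outer endpoint of $e_1$, and repeat indefinitely. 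The main obstacle will be verifying that no unintended alternation arises at a turn; since the stretches between $e_1$ and $e_2$ are simple paths in a tree, every substring of length at least $3$ straddling a turn involves three pairwise distinct vertices and so cannot be a two-letter alternation, and the subcase where $e_1$ and $e_2$ share a vertex $v$ is handled by the explicit periodic walk $xvyvxvy\cdots$ with $x,y$ the other endpoints of $e_1,e_2$.
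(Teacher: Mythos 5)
Your proof is correct and takes essentially the same approach as the paper's: after identifying $\mathcal{J}\setminus\{e\}$ with walks in $\Gamma$ avoiding forbidden alternations (the paper does this via Proposition~\ref{prop1}\eqref{prop1.5} and unique reduced expressions; you make the Tits/Matsumoto reformulation explicit), both proofs bound the length of such a walk when (a)--(c) hold by observing that the only allowed turn is a short alternation on the unique labeled edge, and both exhibit arbitrarily long admissible walks when any condition fails (cycle loops, $stst\cdots$, and a bouncing walk between two labeled edges, which is exactly the paper's element $(s_1s_2\cdots s_ks_{k-1}\cdots s_2)^l$). Your phrasing of the sufficiency step via the decomposition ``simple path in / alternation block / simple path out'' is a somewhat cleaner packaging of the paper's $abauaba$ argument, but the underlying idea is identical.
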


\begin{proof}
If $\Gamma$ contains a cycle, we can just take consecutive products of generators walking as 
long as we wish along this cycle and we never hit any of the relations in $W$. Therefore all elements of $W$ 
obtained in this way would belong to $\mathcal{J}$ by Proposition~\ref{prop1}\eqref{prop1.5},
making $\mathcal{J}$ infinite. Therefore condition~\eqref{propfinite.1} is necessary for finiteness of $\mathcal{J}$.

If $\Gamma$ contains a subgraph $\xymatrix{s\ar@{-}^{\infty}[r]&t}$, then all elements
of the form $stst\dots tsts$ contain no relations in $W$ and hence 
belong to $\mathcal{J}$ by Proposition~\ref{prop1}\eqref{prop1.5}.
This again makes $\mathcal{J}$ infinite and means that condition~\eqref{propfinite.3} is necessary
for finiteness of $\mathcal{J}$.

If $\Gamma$ contains a connected subgraph of the form
\begin{displaymath}
\xymatrix{
s_1\ar@{-}^{m}[r]&s_2\ar@{-}[r]&s_3\ar@{-}[r]&s_4\ar@{-}[r]&\dots\ar@{-}[r]&s_{k-1}\ar@{-}[r]^n&s_k
} 
\end{displaymath}
(with distinct vertices), where $k\geq 3$ and $m,n>3$, then all elements
of the form 
\begin{displaymath}
(s_1s_2s_3\dots s_{k-1}s_ks_{k-1}\dots s_3s_2)^l,\quad \text{ where }\quad l\in\{1,2,3,\dots\},
\end{displaymath}
contain no relations in $W$ and hence 
belong to $\mathcal{J}$ by Proposition~\ref{prop1}\eqref{prop1.5}.
This again makes $\mathcal{J}$ infinite and means that condition~\eqref{propfinite.2} is necessary
for finiteness of $\mathcal{J}$.

Let us now show that conditions~\eqref{propfinite.1}, \eqref{propfinite.2} and \eqref{propfinite.3}
together are sufficient for finiteness of $\mathcal{J}$. Let $w\in\mathcal{J}$ with the unique 
reduced expression $t_kt_{k-1}\dots t_1$ (here a repetition of simple reflections is allowed). 
To prove our claim we just need to establish some 
bound on $k$. As the reduced  expression of $w$ is unique, no pair of consecutive
elements in this expression can commute. This means that all pairs $\{t_2,t_{1}\}$, $\{t_3,t_{2}\}$, an so on,
are edges in $\Gamma$ and hence we can view $w$ as a walk in $\Gamma$ starting at $t_1$ in the
obvious way. As $\Gamma$ contains no cycles by \eqref{propfinite.1}, for big $k$ some edge $\{a,b\}$ has to be walked 
along in the way $aba$. To keep the reduced expression unique, the edge $\{a,b\}$ then must be
labeled. 

If $t_kt_{k-1}\dots t_1$ contains a subexpression of the form $abauaba$, where neither $a$ nor $b$
appears in $u$, then all edges appearing in $aua$ are unlabeled by \eqref{propfinite.2}.
Again, as $\Gamma$ contains no cycles by \eqref{propfinite.1}, if $u\neq e$, one of the edges in $u$
will have to be walked along in the way $a'b'a'$, contradicting uniqueness of the reduced
expression. Hence $u=e$ and this shows that all appearances of $a$ and $b$ in $w$ are next to each
other. 

The length of a subexpression of $w$ of the form $abab\dots$ is bounded by the finite label of
$\{a,b\}$, given by \eqref{propfinite.3}. By the argument above, the lengths of the subwords 
on both sides of this subexpression are bounded by the total number of edges. 
This implies that $k$ is bounded, and the claim follows.
\end{proof}

\subsection{Combinatorics of finite small cells}\label{s2.4}

In this subsection we assume that  $(W,S)$ is a finitely generated indecomposable Coxeter system
such that the small cell $\mathcal{J}$ is finite. Our aim here is to describe 
the intersections $\mathcal{L}_s\cap \mathcal{R}_t$, for $s,t\in S$.

\begin{corollary}\label{corcard1}
Let $(W,S)$ be a finitely generated indecomposable Coxeter system.
If $\Gamma$ is an unlabeled tree, then $|\mathcal{L}_s\cap \mathcal{R}_t|=1$, for all
$s,t\in S$.
\end{corollary}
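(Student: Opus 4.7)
The plan is to invoke Proposition~\ref{prop1}\eqref{prop1.7}, which identifies $\mathcal{L}_s\cap\mathcal{R}_t$ with the set of $w\in W$ whose unique reduced expression is of the form $t\dots s$. So it suffices to show that this set is a singleton.

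For uniqueness, suppose $t_kt_{k-1}\cdots t_1$ (with $t_k=t$ and $t_1=s$) is such a unique reduced expression. If some consecutive pair $\{t_{i+1},t_i\}$ commuted, applying the commutation move would produce a second reduced expression, contradicting uniqueness; hence every such pair must be an edge of $\Gamma$. Moreover, since $\Gamma$ is unlabeled, every braid relation reads $aba=bab$, so any subword of the form $aba$ would yield a second reduced expression via the braid move; thus no such subword occurs. These two facts say exactly that the expression traces a non-backtracking walk in $\Gamma$ from $t$ to $s$. Since $\Gamma$ is a tree, a non-backtracking walk cannot revisit a vertex (this would force a cycle), so the walk is a simple path, and hence is uniquely determined: it is the unique simple path from $t$ to $s$ in $\Gamma$. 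This forces $|\mathcal{L}_s\cap\mathcal{R}_t|\leq 1$.

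For existence, I would read off the unique simple path from $t$ to $s$ in $\Gamma$ to obtain a word in the generators. By construction, no two adjacent letters commute (each adjacent pair is an edge), no length-three subword is of the form $aba$ (the path is simple, so no vertex is repeated), and no adjacent letters are equal. By Tits' solution to the word problem in Coxeter groups, any reduced expression for the element represented by this word is obtained from it by a sequence of braid and commutation moves; since none are applicable, the word is reduced and is the only reduced expression of the associated element. By Proposition~\ref{prop1}\eqref{prop1.7}, this element lies in $\mathcal{L}_s\cap\mathcal{R}_t$, giving $|\mathcal{L}_s\cap\mathcal{R}_t|\geq 1$ and completing the argument.

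I do not anticipate a serious obstacle. The only mildly delicate point is the verification in the existence step that the simple-path word is actually a reduced expression (and not merely an expression for a shorter element); this is handled cleanly by noting that no braid, commutation, or cancellation move is available and appealing to Tits' theorem, which also delivers the uniqueness of the reduced expression at once.
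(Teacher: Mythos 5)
Your proof is correct and follows essentially the same route as the paper: view an element of $\mathcal{L}_s\cap\mathcal{R}_t$ via Proposition~\ref{prop1}\eqref{prop1.7} as a walk in $\Gamma$, use uniqueness of the reduced expression and the absence of labels to rule out commutation and $aba$-subwords, and conclude that the walk is the unique simple path from $t$ to $s$ in the tree. The only difference is that you spell out the existence half explicitly via Tits' theorem, whereas the paper leaves it implicit and simply cites the argument from Proposition~\ref{propfinite}; this is a small gain in rigor but not a different method.
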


\begin{proof}
We view $w\in \mathcal{L}_s\cap \mathcal{R}_t$ as a walk in $\Gamma$ similarly to the proof of
Proposition~\ref{propfinite}. As all edges in $\Gamma$ are unlabeled, the argument in the 
proof of Proposition~\ref{propfinite} implies that no edge can be repeated during this walk.
Therefore, by Proposition~\ref{prop1}\eqref{prop1.7},  
the only possibility for $w$ is to be the unique shortest path from $s$ 
to $t$ of the form $t\dots s$. The claim follows.
\end{proof}

Let us now assume that $\Gamma$ is a tree with a unique labeled edge $\{s,t\}$ of finite 
label $n\in\{4,5,\dots\}$. Let $\Gamma^{(s)}$ denote the connected component of $\Gamma\setminus\{t\}$
containing $s$ (or, equivalently, the full subgraph of $\Gamma$ consisting of
all vertices $r$ for which there is a walk from $r$ to $s$ which does not pass through $t$).
Let, similarly,  $\Gamma^{(t)}$ denote the connected component of $\Gamma\setminus\{s\}$
containing $t$.
Define the function $\pi:S\to\{s,t\}$ by sending vertices in $\Gamma^{(s)}$ to $s$
and vertices in $\Gamma^{(t)}$ to $t$.

\begin{proposition}\label{propcard2}
Assume that $\Gamma$ is a tree with a unique labeled edge $\{s,t\}$.
Then, for $p,q\in S$, there is a bijection between
$\mathcal{L}_{\pi(p)}\cap \mathcal{R}_{\pi(q)}$ and 
$\mathcal{L}_p\cap \mathcal{R}_q$
given by sending $w\in \mathcal{L}_{\pi(p)}\cap \mathcal{R}_{\pi(q)}$ to 
$u_1wu_2$, where $u_1$ is the unique shortest path from $\pi(q)$ to $q$
and $u_2$ is the unique shortest path from $p$ to $\pi(p)$.
\end{proposition}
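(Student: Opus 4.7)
Our approach is to extend the walk-theoretic analysis used in the proof of Proposition~\ref{propfinite}. By Proposition~\ref{prop1}\eqref{prop1.5} and \ref{prop1}\eqref{prop1.7}, any $x\in\mathcal{L}_p\cap\mathcal{R}_q$ has a unique reduced expression whose leftmost letter is $q$ and whose rightmost letter is $p$; we view this expression as a walk in $\Gamma$ starting at $p$ and ending at $q$. The core structural claim that we will establish is that this walk decomposes canonically into three consecutive pieces: a simple path from $p$ to $\pi(p)$ inside $\Gamma^{(\pi(p))}$, followed by an alternating block on $\{s,t\}$ of length at most $n-1$, followed by a simple path from $\pi(q)$ to $q$ inside $\Gamma^{(\pi(q))}$.

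The three ingredients all follow from the local obstructions already exploited in the proof of Proposition~\ref{propfinite}. Any maximal portion of the walk lying inside one of the unlabeled subtrees $\Gamma^{(\pi(p))}$ or $\Gamma^{(\pi(q))}$ must be a simple path, since any traversal of an unlabeled edge in an $aba$ pattern is re-writable as $bab$ by the braid relation and contradicts uniqueness of the reduced expression. The alternation across the labeled edge $\{s,t\}$ is bounded by $n-1$, otherwise the labeled braid relation $\underbrace{sts\cdots}_n = \underbrace{tst\cdots}_n$ produces a second reduced expression. Moreover, the walk crosses the labeled edge in a single alternating block: a second crossing would force the walk-portion between the two crossings to be a closed loop inside one unlabeled subtree, which, since $\Gamma$ is a tree, necessarily backtracks and reintroduces a forbidden $aba$ pattern.

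Once the decomposition is established, the bijection is immediate combinatorially: the middle alternating block is exactly the walk of an element of $\mathcal{L}_{\pi(p)}\cap\mathcal{R}_{\pi(q)}$, and conversely, given $w\in\mathcal{L}_{\pi(p)}\cap\mathcal{R}_{\pi(q)}$ one reattaches the unique shortest-path walks on the two unlabeled sides. At the level of $W$, this reattachment is recorded precisely by the prescribed product $u_1wu_2$, with the correct element of $\mathcal{L}_p\cap\mathcal{R}_q$ appearing after the braid and commutation reductions near the junctions with $\pi(p)$ and $\pi(q)$. The main obstacle is this final reduction step: we must check that the product $u_1wu_2$, which is a priori not given in reduced form, simplifies in $W$ to the element realizing the concatenated walk and not to an element of some neighboring cell. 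Injectivity and surjectivity of the resulting map then follow at once from the uniqueness of the walk decomposition.
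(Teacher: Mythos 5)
The walk-theoretic framing you use matches the paper's intent (the paper's own proof simply cites ``the argument in the proof of Proposition~\ref{propfinite}''), but your ``core structural claim'' is not actually true, and this is a genuine gap rather than a technicality. You claim that the reduced-expression walk of \emph{every} $x\in\mathcal{L}_p\cap\mathcal{R}_q$ decomposes as (simple path from $p$ to $\pi(p)$) $+$ (alternating block on $\{s,t\}$) $+$ (simple path from $\pi(q)$ to $q$). This fails exactly when $\pi(p)=\pi(q)$ and the element is ``short'': in the second example of Subsection~\ref{s2.5}, take $p=q=1$, so $\pi(p)=\pi(q)=2$. Then $\mathcal{L}_1\cap\mathcal{R}_1=\{1,\,12321\}$. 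The element $12321$ does decompose as you describe, but the element $1$ has walk consisting of the single vertex $1$, which never reaches $\pi(p)=2$, so there is no simple path from $1$ to $2$ inside it and no alternating block. The argument in Proposition~\ref{propfinite} only produces an $\{s,t\}$-alternating block for walks that are long enough to force a repeated edge; it does not assert that every walk passes through the labeled edge, and your decomposition presupposes that it does.

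The issue resurfaces precisely in the step you flag at the end and leave open: ``we must check that the product $u_1wu_2$ \dots simplifies in $W$ to the element realizing the concatenated walk.'' This check cannot succeed in general as a statement about multiplication in $W$. Continuing the example, the relevant $u_1$ and $u_2$ are both $s_1$, and for $w=2\in\mathcal{L}_2\cap\mathcal{R}_2$ one gets $u_1wu_2=s_1s_2s_1=121=212$, which has two reduced expressions and therefore does not lie in $\mathcal{J}$ at all (and certainly does not equal the intended target $1$). So the ``final reduction step'' is not a routine verification one can defer; when $\pi(p)=\pi(q)$ the concatenation of the three walks creates an unlabeled backtrack $aba$ at the junction, and the bijection has to be set up so that such backtracks are cancelled at the level of walks rather than realized as a group-theoretic product. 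Your proof needs to treat the case $\pi(p)=\pi(q)$ separately (e.g.\ matching the unique element of $\mathcal{L}_p\cap\mathcal{R}_q$ whose walk avoids the labeled edge with the element $\pi(p)\in\mathcal{L}_{\pi(p)}\cap\mathcal{R}_{\pi(q)}$, and matching the remaining elements via their nontrivial alternating blocks) instead of asserting a uniform three-piece decomposition and a literal $W$-product.
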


\begin{proof}
Using  Proposition~\ref{prop1}\eqref{prop1.7}, it is easy to see that the map from 
the set $\mathcal{L}_{\pi(p)}\cap \mathcal{R}_{\pi(q)}$ to the set $\mathcal{L}_p\cap \mathcal{R}_q$
described in the formulation is well-defined. It is obviously injective. 
Furthermore, it is surjective due to the argument in the proof of 
Proposition~\ref{propfinite}. The claim follows.
\end{proof}

In the setup of Proposition~\eqref{propcard2}, let $(W^{\{s,t\}},\{s,t\})$ be the parabolic
Coxeter subsystem of $(W,S)$ corresponding to $\{s,t\}\subset S$. We will add the
superscript $\{s,t\}$ to objects associated with this Coxeter group, for example,
$\mathcal{L}_s^{\{s,t\}}$ means the left cell of $W^{\{s,t\}}$ containing $s$.

\begin{corollary}\label{corcard3}
Assume that $\Gamma$ is a tree with a unique labeled edge $\{s,t\}$.
Then, for $p,q\in \{s,t\}$, we have
\begin{displaymath}
\mathcal{L}_p\cap \mathcal{R}_q =\mathcal{L}_p^{\{s,t\}}\cap \mathcal{R}_q^{\{s,t\}}.
\end{displaymath}
\end{corollary}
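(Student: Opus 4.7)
The plan is to invoke Proposition~\ref{prop1}\eqref{prop1.5} and \eqref{prop1.7} both for $W$ and for the parabolic $W^{\{s,t\}}$: each of $\mathcal{L}_p\cap\mathcal{R}_q$ and $\mathcal{L}_p^{\{s,t\}}\cap\mathcal{R}_q^{\{s,t\}}$ is then described as the set of elements whose unique reduced expression (in $W$, respectively in $W^{\{s,t\}}$) has the form $q\cdots p$. The corollary thus reduces to checking that, for $p,q\in\{s,t\}$, these two descriptions pick out the same subset of $W$.

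One direction is easy. If $w\in\mathcal{L}_p^{\{s,t\}}\cap\mathcal{R}_q^{\{s,t\}}$, then its unique $W^{\{s,t\}}$-reduced expression $q\cdots p$ uses only $s$ and $t$. Since braid moves preserve the multiset of simple reflections appearing in a reduced expression, every $W$-reduced expression of $w$ lies in $W^{\{s,t\}}$ and therefore coincides with this one. So $w$ has a unique reduced expression in $W$, still of the form $q\cdots p$, and Proposition~\ref{prop1}\eqref{prop1.7} places $w$ in $\mathcal{L}_p\cap\mathcal{R}_q$.

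The reverse inclusion carries the actual content. Given $w\in\mathcal{L}_p\cap\mathcal{R}_q$ with unique reduced expression $t_k\cdots t_1$ (where $t_1=p$, $t_k=q$), I would repeat the walk setup from the proof of Proposition~\ref{propfinite}: consecutive $t_i,t_{i+1}$ cannot commute, so $\{t_i,t_{i+1}\}$ is an edge of $\Gamma$, and the expression traces out a walk $p=t_1,\dots,t_k=q$ in $\Gamma$. The central claim is that this walk never leaves $\{s,t\}$. If it did, consider (after swapping $s$ and $t$ if necessary) the first visit of the walk to $\Gamma^{(s)}\setminus\{s\}$. Since $\Gamma^{(s)}\setminus\{s\}$ is joined to the rest of $\Gamma$ only through $s$, the step preceding this first visit is at $s$; and because the walk ends at $q\in\{s,t\}$, it must later leave $\Gamma^{(s)}\setminus\{s\}$, which by the same reasoning forces it back to $s$. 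This produces a nontrivial closed sub-walk from $s$ to $s$ inside the subtree $\Gamma^{(s)}$, which must contain a backtracking step $v_{i-1},v_i,v_{i+1}$ with $v_{i-1}=v_{i+1}$. The resulting subword $aba$ of the reduced expression sits on an edge $\{a,b\}\subseteq\Gamma^{(s)}$, which is unlabeled by hypothesis, so $aba=bab$ yields a second reduced expression of $w$, contradicting uniqueness. Hence the walk stays in $\{s,t\}$, so $w\in W^{\{s,t\}}$, and Proposition~\ref{prop1}\eqref{prop1.7} applied inside $W^{\{s,t\}}$ puts $w$ in $\mathcal{L}_p^{\{s,t\}}\cap\mathcal{R}_q^{\{s,t\}}$.

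The main obstacle is the walk/backtracking argument just sketched, but it essentially reuses the method from the proof of Proposition~\ref{propfinite}, so I do not anticipate new difficulties beyond careful bookkeeping of the entry and exit points of the sub-walk into $\Gamma^{(s)}\setminus\{s\}$.
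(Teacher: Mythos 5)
Your proposal is correct and follows essentially the same route as the paper, which dispatches the easy inclusion ``from the definitions'' and refers the reverse inclusion to the walk argument in the proof of Proposition~\ref{propfinite}; you have simply spelled out both halves in detail. One minor slip: braid moves preserve the \emph{set} of simple reflections occurring in a reduced expression, not the multiset (e.g.\ $sts=tst$ have different multisets of letters), but since your argument only needs the support of $w$ to be contained in $\{s,t\}$, this does not affect the proof.
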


\begin{proof}
That every element in $\mathcal{L}_p^{\{s,t\}}\cap \mathcal{R}_q^{\{s,t\}}$
belongs to $\mathcal{L}_p\cap \mathcal{R}_q$ is clear from the definitions.
That  every element in  $\mathcal{L}_p\cap \mathcal{R}_q$ belongs to
$\mathcal{L}_p^{\{s,t\}}\cap \mathcal{R}_q^{\{s,t\}}$ follows from
the argument in the proof of  Proposition~\ref{propfinite}.
\end{proof}

\subsection{Examples}\label{s2.5}

If $\Gamma$ is given by 
\begin{displaymath}
\xymatrix{ 
1\ar@{-}[rd]&&2\ar@{-}[ld]\\
3\ar@{-}[r]&4&5\ar@{-}[l],
}
\end{displaymath}
then the cell $\mathcal{J}$ has the following structure (here columns are left cells and 
rows are right cells indexed by the corresponding simple reflections):
\begin{displaymath}
\begin{array}{c||c|c|c|c|c}
&1&2&3&4&5\\ 
\hline\hline
1&1&142&143&14&145\\ 
\hline
2&241&2&243&24&245\\ 
\hline
3&341&342&3&34&345\\ 
\hline
4&41&42&43&4&45\\ 
\hline
5&541&542&543&54&5
\end{array}
\end{displaymath}

If $\Gamma$ is given by 
\begin{displaymath}
\xymatrix{ 
1\ar@{-}[r]&2\ar@{-}[r]^4&3&4\ar@{-}[l]\\
&&5\ar@{-}[u]&,
}
\end{displaymath}
then the cell $\mathcal{J}$ has the following structure (here columns are left cells and 
rows are right cells indexed by the corresponding simple reflections):
\begin{displaymath}
\begin{array}{c||c|c|c|c|c}
&1&2&3&4&5\\ 
\hline\hline
1&1,12321&12,1232&123&1234&1235\\ 
\hline
2&21,212321&2,232&23&234&235\\ 
\hline
3&321&32&3,323&34,3234&35,3235\\ 
\hline
4&4321&432&43,4323&4,43234&435,43235\\ 
\hline
5&5321&532&53,5323&534,53234&5,53235
\end{array}
\end{displaymath}

\section{Small quotients of Soergel bimodules and their $2$-representations}\label{s3}

\subsection{Small quotients of Soergel bimodules}\label{s3.1}
Below, by  {\em graded} we always mean {\em $\mathbb{Z}$-graded}. We also work over $\mathbb{C}$.

Let $\cS^{\mathrm{gr}}:=\cS^{\mathrm{gr}}(W,S,\mathfrak{h})$ 
denote the monoidal category of (graded) Soergel bimodules over the polynomial
algebra  $\mathbb{C}[\mathfrak{h}^*]$ associated to $(W,S)$ and $\mathfrak{h}$, see \cite{So2,EW}.
By \cite{So2,EW}, the split Grothendieck ring of $\cS^{\mathrm{gr}}$ is isomorphic to $\mathbf{H}$
and, for $w\in W$, we denote by $B_w$ the unique (up to isomorphism) indecomposable Soergel bimodule which corresponds
to the element $\underline{H}_w$ under this isomorphism. Our normalization of grading shifts
is chosen such that the full subcategory $\mathcal{P}$
of $\cS^{\mathrm{gr}}$ with objects $\{B_w:w\in W\}$ is positively graded. The graded category $\cS^{\mathrm{gr}}$ has finite dimensional graded components, that is, the morphism space between any two indecomposable 1-morphisms is finite dimensional.

\begin{lemma}\label{lem1}
The monoidal category $\cS^{\mathrm{gr}}$ has a unique tensor ideal $\cI$ which is maximal 
in the set of all graded tensor ideals of $\cS^{\mathrm{gr}}$ having the property that they 
\begin{equation}\label{eq1}
\text{do not contain any }\mathrm{id}_{B_w}, \text{ where }w\in \mathcal{J}.
\end{equation}
\end{lemma}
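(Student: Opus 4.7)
The plan is to let $\cI$ be the sum of all graded tensor ideals of $\cS^{\mathrm{gr}}$ that satisfy \eqref{eq1}. Since an arbitrary sum of tensor ideals is again a tensor ideal and a sum of graded subspaces is graded, this $\cI$ is automatically a graded tensor ideal. If it also satisfies \eqref{eq1}, then by construction it is the unique maximum in the poset of graded tensor ideals with property \eqref{eq1}, so the uniqueness assertion of the lemma is automatic. The only substantive point is thus to verify that property \eqref{eq1} is preserved under taking this sum.

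For this, suppose for contradiction that $\mathrm{id}_{B_w}\in \cI$ for some $w\in \mathcal{J}$. Then one may write $\mathrm{id}_{B_w}=\sum_{i=1}^n f_i$ with each $f_i$ in some graded tensor ideal $\cI_i$ satisfying \eqref{eq1}. Because each $\cI_i$ is graded and $\mathrm{id}_{B_w}$ is homogeneous of degree $0$, extracting degree-zero components of this equation lets one replace each $f_i$ by its degree-zero component and so assume $f_i\in \cI_i\cap R$, where $R:=\mathrm{End}^0_{\cS^{\mathrm{gr}}}(B_w)$ denotes the degree-zero endomorphism algebra of $B_w$.

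The ring-theoretic input needed is that $R$ is a finite-dimensional local $\mathbb{C}$-algebra. Finite-dimensionality is the final sentence of Section~\ref{s3.1}. Locality follows from a standard Krull--Schmidt argument: $B_w$ is indecomposable in the Krull--Schmidt category $\cS^{\mathrm{gr}}$, so its ungraded endomorphism ring is local; positivity of the grading on $\mathcal{P}$ forces any idempotent of $R$ to remain an idempotent in the ungraded ring, hence to be $0$ or $\mathrm{id}_{B_w}$, which identifies $R$ as local. Granting locality, each $\cI_i\cap R$ is a two-sided ideal of $R$ not containing the unit, hence a proper ideal, hence contained in the Jacobson radical of $R$. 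Summing yields $\mathrm{id}_{B_w}\in \mathrm{rad}(R)$, contradicting the fact that the identity is a unit in $R$. The only real obstacle I foresee is justifying locality of $R$ carefully in the graded, only-locally-finite-dimensional setting; this should be standard, but may be worth either spelling out or pinning to a specific reference.
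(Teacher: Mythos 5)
Your proposal takes essentially the same approach as the paper: define $\cI$ as the sum of all graded tensor ideals with property~\eqref{eq1} and then verify that this sum again satisfies~\eqref{eq1}, invoking positivity of the grading. The paper's proof is a one-liner and your writeup fills in a reasonable version of the missing detail.

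There is, however, a misstatement in your locality argument. You assert that since $B_w$ is indecomposable in the Krull--Schmidt category $\cS^{\mathrm{gr}}$, ``its ungraded endomorphism ring is local.'' This is false: the ungraded endomorphism ring of a Soergel bimodule is infinite-dimensional and generally not local --- for example $\mathrm{End}(B_e)\cong \mathbb{C}[\mathfrak{h}^*]$ is a polynomial ring. What the Krull--Schmidt property of the graded category gives you directly is that the degree-zero endomorphism algebra $R=\mathrm{End}^0_{\cS^{\mathrm{gr}}}(B_w)$ is local, which is the thing you actually need; there is no intermediate ``ungraded'' step. Accordingly the sentence ``positivity of the grading forces any idempotent of $R$ to remain an idempotent in the ungraded ring'' does no work --- an idempotent of $R$ is already an element of the ungraded ring, and the ungraded ring having only trivial idempotents is not the true statement. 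The short correct argument is: $R$ is finite-dimensional (finite-dimensionality of graded components, Subsection~\ref{s3.1}); indecomposability of $B_w$ in $\cS^{\mathrm{gr}}$ means $R$ has no nontrivial idempotents; and a finite-dimensional $\mathbb{C}$-algebra with only trivial idempotents is local. With this repair, your reduction to degree zero and the Jacobson-radical argument are fine, and the proof goes through. (The paper's terse invocation of positivity can also be read as the slightly stronger fact that $R\cong\mathbb{C}$, in which case the conclusion is immediate; your local-ring version is marginally more general and equally valid.)
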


\begin{proof}
Due to positivity of the grading on $\mathcal{P}$, the sum of any family of ideals having property~\eqref{eq1}
also has property~\eqref{eq1}. Therefore $\cI$ is just the sum of all tensor ideals 
having property~\eqref{eq1}.
\end{proof}

The quotient monoidal category $\underline{\cS}^{\mathrm{gr}}:=\cS^{\mathrm{gr}}/\cI$
is called the {\em small quotient} of $\cS^{\mathrm{gr}}$, cf. \cite{KMMZ}. We denote by
$\cS$ and $\underline{\cS}$ the ungraded versions of $\cS^{\mathrm{gr}}$ and 
$\underline{\cS}^{\mathrm{gr}}$, respectively.

By construction, the images of $B_w$, where $w\in\mathcal{J}\cup\{e\}$, form a
complete and irredundant list of representatives of
the isomorphism classes of indecomposable objects in $\underline{\cS}$.

\subsection{$2$-categories and $2$-representations}\label{s3.2}

Although it is natural to define $\cS^{\mathrm{gr}}$ as a tensor category, we would like to 
adapt to the setup of $2$-representations of $2$-categories considered in \cite{MM1,MM3}.
For this we use the coherence theorem for monoidal categories and consider a strictification 
of $\cS^{\mathrm{gr}}$ which we will denote by the same symbol, abusing notation. 
We also identify a strict monoidal category with the corresponding $2$-category  with one object.

Recall, from \cite{MM1}, that a finitary $2$-category (over $\mathbb{C}$) 
is a $2$-category $\cC$ such that
\begin{itemize}
\item $\cC$ has finitely many objects;
\item each $\cC(\mathtt{i},\mathtt{j})$ is equivalent to the category of projective modules over
some finite dimensional $\mathbb{C}$-algebra;
\item all compositions are biadditive and $\mathbb{C}$-bilinear whenever this makes sense and
all identity $1$-morphisms are indecomposable.
\end{itemize}
The $2$-category $\cS$ is never finitary as homomorphism spaces between Soergel
bimodules are infinite dimensional vector spaces. However, the small quotient $\underline{\cS}$ is finitary
in some cases, as we will see in Subsection~\ref{s3.4} below.

A $2$-representation of a $2$-category $\cC$ is a $2$-functor from $\cC$
to an appropriate $2$-category, see \cite{MM3}. This can also be viewed as a functorial action
of $\cC$ on suitable categories. All $2$-representation of $\cC$
form a $2$-category in a natural way, see \cite{MM3} for details.
For every object $\mathtt{i}\in\cC$, we have the corresponding {\em principal}
$2$-representation $\cC(\mathtt{i},{}_-)$.

\subsection{A version of graded ``abelianization''}\label{s3.3}

There is a natural diagrammatic abelianization $2$-functor for finitary $2$-categories,
see \cite[Subsection~4.2]{MM2}.  Here we will need a slight modification of this 
functor due to the fact that $\cS^{\mathrm{gr}}$ is not finitary. 
We only need to abelianize 
the principal $2$-representation of $\cS^{\mathrm{gr}}$, so we will try to present the
object we need with a minimum amount of technicalities. For $i\in\mathbb{Z}$, we denote by 
$\langle i\rangle$ the corresponding functor of grading shift (which maps elements of degree 
$n$ to elements of degree $n-i$, for all $n\in\mathbb{N}$). Here we view $\cS^{\mathrm{gr}}$ as a monoidal category rather than a 2-category and define its abelianization as a 1-category.

We denote by $\overline{\cS^{\mathrm{gr}}}$ the category whose endomorphisms are diagrams of the form
\begin{displaymath}
\big(\prod_{i\leq i_0}\mathrm{F}_i\langle i\rangle\big)\to \mathrm{G}, \text{ where } i_0\in\mathbb{Z},
\end{displaymath}
in $\cS^{\mathrm{gr}}$,
and whose morphisms are given by the commutative solid squares in $\cS^{\mathrm{gr}}$
\begin{displaymath}
\xymatrix{
\displaystyle
\big(\prod_{i\leq i_0}\mathrm{F}_i\langle i\rangle\big)\ar[rr]\ar[d]&&\mathrm{G}\ar[d]^{\alpha}\ar@{.>}[dll]_{\beta}\\
\displaystyle
\big(\prod_{j\leq i'	_0}\mathrm{F}'_j\langle j\rangle\big)\ar[rr]&&\mathrm{G}'
}
\end{displaymath}
modulo those in which $\alpha$ factors through some $\beta$. It is important to note that,  due to our positive grading and the bound on the grading shift, for each $i$, the $1$-morphism $\mathrm{F}_i\langle i\rangle$ 
(as well as the $1$-morphism $\mathrm{G}$) has a non-zero 
homomorphism only to finitely many $1$-morphisms $\mathrm{F}'_j\langle j\rangle$.

By construction, the category $\overline{\cS^{\mathrm{gr}}}$ is equivalent to the category of 
finitely generated graded $\mathcal{P}^{\mathrm{op}}$-modules. 
In particular, $\overline{\cS^{\mathrm{gr}}}$ is abelian
whenever $\mathcal{P}^{\mathrm{op}}$ is noetherian. 
We do not know when $\mathcal{P}^{\mathrm{op}}$ is noetherian, but we do not need $\overline{\cS^{\mathrm{gr}}}$ to be abelian in what follows.

The left regular action of $\cS^{\mathrm{gr}}$ on itself extends, in the obvious way, to an
action of $\cS^{\mathrm{gr}}$ on $\overline{\cS^{\mathrm{gr}}}$.

\subsection{Finitary small quotients of Soergel bimodules}\label{s3.4}

\begin{proposition}\label{prop2}
Assume that $(W,S)$ is indecomposable. Then the $2$-category $\underline{\cS}$ is finitary
if and only if $\mathcal{J}$ is finite.
\end{proposition}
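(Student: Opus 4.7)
The claim has two directions. For the forward implication, as noted immediately after Lemma~\ref{lem1}, the isomorphism classes of indecomposable $1$-morphisms of $\underline{\cS}$ are in bijection with $\mathcal{J}\cup\{e\}$. A finitary $2$-category has only finitely many such classes, forcing $\mathcal{J}$ to be finite.

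For the converse, suppose $\mathcal{J}$ is finite. The conditions for $\underline{\cS}$ to be finitary then reduce to (i) finite-dimensionality of $\Hom_{\underline{\cS}}(B_x,B_y)$ for all $x,y\in\mathcal{J}\cup\{e\}$ and (ii) indecomposability of the identity $B_e$. Item (ii) is a formal consequence of (i): passing to $\underline{\cS}^{\mathrm{gr}}$, the positivity of the grading on $\mathcal{P}$ makes $\operatorname{End}_{\underline{\cS}^{\mathrm{gr}}}(B_e)$ a connected graded algebra (one-dimensional in degree zero), and any finite-dimensional connected graded algebra has a nilpotent augmentation ideal and is therefore local. For (i), each graded piece $\Hom^d_{\underline{\cS}^{\mathrm{gr}}}(B_x,B_y)$ is automatically finite-dimensional as a quotient of the corresponding piece in $\cS^{\mathrm{gr}}$, so the task is reduced to showing that $\Hom^d_{\underline{\cS}^{\mathrm{gr}}}(B_x,B_y)=0$ for $d$ above some bound $N=N(x,y)$.

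The crux is the following claim: every morphism $B_x\to B_y\langle d\rangle$ in $\cS^{\mathrm{gr}}$ of sufficiently large degree can be rewritten so as to factor through some $B_z$ with $z\notin\mathcal{J}\cup\{e\}$, and therefore lies in $\cI$. The natural approach is to combine Soergel's hom formula with the realization of $B_w$ (for $w\in\mathcal{J}$) as a summand of $B_{s_k}\otimes\cdots\otimes B_{s_1}$, where $w=s_k\cdots s_1$ is the unique reduced expression: a morphism of large degree $d$ arises (after suitable rewriting) from a long ``walk'' on the Coxeter-Dynkin diagram $\Gamma$, and by pigeonhole on the finite $\Gamma$ such a walk is forced, for $d$ sufficiently large, to contain a subword whose associated product has an indecomposable summand $B_z$ with $z$ admitting multiple reduced expressions, i.e.\ $z\notin\mathcal{J}\cup\{e\}$ by Proposition~\ref{prop1}\eqref{prop1.5}.

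The main obstacle is making this pigeonhole step precise: controlling the indecomposable summands of long tensor products $B_{s_k}\otimes\cdots\otimes B_{s_1}$ together with their graded multiplicities is technically delicate. A pragmatic alternative that I would pursue in parallel is a reduction to a finite parabolic: by Corollary~\ref{corcard3}, for $p,q\in\{s,t\}$ the intersection $\mathcal{L}_p\cap\mathcal{R}_q$ coincides with the corresponding intersection in the finite dihedral parabolic $W^{\{s,t\}}$, so $\underline{\cS}^{\{s,t\}}$ is already finitary by \cite{KMMZ}; Proposition~\ref{propcard2} and Corollary~\ref{corcard1} then express all remaining cell intersections in terms of single ``tree-path'' elements along the unlabeled part of $\Gamma$, permitting one to present $\underline{\cS}$ as a finitary enlargement of $\underline{\cS}^{\{s,t\}}$.
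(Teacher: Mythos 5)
The forward implication you give matches the paper's (indecomposables are indexed by $\mathcal{J}\cup\{e\}$), and is fine. The converse, however, has a genuine gap, and you essentially concede as much yourself: the pigeonhole argument — that every morphism $B_x\to B_y\langle d\rangle$ of large degree can be rewritten to factor through some $B_z$ with $z\notin\mathcal{J}\cup\{e\}$ — is precisely the hard content, and neither Soergel's hom formula (which computes graded ranks but does not exhibit factorizations through particular summands) nor the heuristic ``long walks yield a bad summand'' suffices to establish it. The case $x=y=e$ already illustrates the difficulty: you must show that all but a bounded-degree portion of $\mathrm{End}_{\cS^{\mathrm{gr}}}(B_e)=\mathbb{C}[\mathfrak{h}^*]$ dies in the quotient, and that is not visible from tensor-product combinatorics alone. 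Your fallback — ``present $\underline{\cS}$ as a finitary enlargement of $\underline{\cS}^{\{s,t\}}$'' via Corollary~\ref{corcard3} and Proposition~\ref{propcard2} — is not an argument: those statements control cell combinatorics, not $2$-morphism spaces, and no mechanism is given that converts them into dimension bounds on $\Hom_{\underline{\ccS}}(B_x,B_y)$.

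The paper's route is structurally different and sidesteps all of this. Instead of bounding degrees directly, it passes to the diagrammatic abelianization $\overline{\cS^{\mathrm{gr}}}$ and considers the full subcategory $\mathcal{A}$ given by the additive closure (up to shift) of the objects $B_wL_s$, $w\in\mathcal{L}_s$. Using adjunction and positivity of the grading, each $B_xL_s$ is shown to be finite-dimensional, so $\mathcal{A}$ has finitely many indecomposables up to shift and the $\cS^{\mathrm{gr}}$-action on $\mathcal{A}$ is an action on $A\text{-}\mathrm{gproj}$ for a finite-dimensional graded algebra $A$; since $1$-morphisms act by exact functors (biadjoints), the kernel $2$-ideal $\cJ$ of this action has finite-dimensional $\Hom$-spaces in $\cS^{\mathrm{gr}}/\cJ$. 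One then observes $\mathrm{id}_{B_s}\notin\cJ$ (because $B_s(B_sL_s)\neq 0$), so $\cJ\subseteq\cI$ by maximality, and $\underline{\cS}^{\mathrm{gr}}$ is a further quotient of $\cS^{\mathrm{gr}}/\cJ$, hence finitary. The key idea you are missing is exactly this intermediate ideal $\cJ$: finiteness of $\Hom$ is extracted from the finiteness of a concrete $2$-representation (the cell $2$-representation), not from a degree bound on morphisms in $\cS^{\mathrm{gr}}$ itself.

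Your side remark that indecomposability of $B_e$ follows formally from finite-dimensionality via connectedness of the grading is correct but unnecessary: $B_e$ is the regular bimodule, already indecomposable in $\cS$, and nonzero in the quotient.
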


\begin{proof}
If  $\underline{\cS}$ is finitary, it must contain only finitely many isomorphism classes of
indecomposable objects. By construction, the latter are indexed by $\mathcal{J}\cup\{e\}$.
Therefore the condition $|\mathcal{J}|<\infty$ is necessary.

To prove that the condition $|\mathcal{J}|<\infty$ is sufficient, we assume that this condition is satisfied 
and we need to show that all homomorphism spaces in  $\underline{\cS}$ are finite dimensional.

Let $\overline{\cS^{\mathrm{gr}}}$ be the diagrammatic abelianization of $\cS^{\mathrm{gr}}$
as in Subsection~\ref{s3.3}. For $w\in W$, we denote by $L_w$ the simple top of 
the projective object $B_w$ in $\overline{\cS^{\mathrm{gr}}}$.
Fix some $s\in S$ and consider the full subcategory $\mathcal{A}$ of $\overline{\cS^{\mathrm{gr}}}$
given by the additive  closure of the objects of the form $B_w L_s$, where $w\in\mathcal{L}_s$ 
(up to graded shift). Similarly to \cite[Lemma~12]{MM1}, one shows that, for $w\in W$,
the inequality $B_w L_s\neq 0$ implies $w\in \mathcal{L}_s\cup\{e\}$. Consequently, the action of
$\cS^{\mathrm{gr}}$ on $\overline{\cS^{\mathrm{gr}}}$ restricts to $\mathcal{A}$.

Our next observation is that each $B_x L_s$, where $x\in \mathcal{L}_s$, is finite dimensional in the sense that the direct sum
\[\bigoplus_{y\in W,i\in \mathbb Z}\mathrm{Hom}_{\overline{\ccS^{\mathrm{gr}}}}(B_y\langle i\rangle,B_x L_s)\]
is finite dimensional. Indeed, for $y\in W$ and $i\in\mathbb{Z}$, using adjunction we have 
\begin{displaymath}
\mathrm{Hom}_{\overline{\ccS^{\mathrm{gr}}}}(B_y\langle i\rangle,B_x L_s)\cong
\mathrm{Hom}_{\overline{\ccS^{\mathrm{gr}}}}(B_{x^{-1}}B_y\langle i\rangle, L_s).
\end{displaymath}
if the right hand side is non-zero, then $y\in \mathcal{L}_s\cup\{e\}$ and $|i|$ 
must be bounded by the length of $x$. This leaves us with finitely many choices for both
$x$ and $i$. 
Furthermore, all graded components of homomorphism spaces in 
$\overline{\cS^{\mathrm{gr}}}$ are finite dimensional.

As each $B_x L_s$, where $x\in \mathcal{L}_s$, is finite dimensional, it has
finitely many indecomposable summands. Therefore, up to grading shift, $\mathcal{A}$ has 
finitely many indecomposable objects. In other words, the action of $\cS^{\mathrm{gr}}$ 
on $\mathcal{A}$ is an action on some category which is equivalent to the category of 
graded projective modules over a finite dimensional graded algebra. Let $\cJ$ be the
kernel (in $\cS^{\mathrm{gr}}$) of this action. Note also that this action is given by exact 
functors as all $1$-morphisms in $\cS^{\mathrm{gr}}$ have biadjoints.
This implies that all morphism spaces between $1$-morphisms in
the ungraded version of $\cS^{\mathrm{gr}}/\cJ$ are finite dimensional.

Note that the ideal $\cJ$ is graded by construction and that the identity $2$-morphism on $B_s$
does not belong to $\cJ$ as $B_s(B_s L_s)\neq 0$. Hence $\cJ\subset \cI$ by the maximality
of $\cI$. Consequently, all morphism spaces between $1$-morphisms in
$\underline{\cS}$ are finite dimensional. This completes the proof.
\end{proof}

\subsection{Simple transitive $2$-representations of finitary small quotients of Soergel bimodules}\label{s3.5}

Following \cite{MM5}, we are interested in classification of {\em simple transitive $2$-representations}
of $\underline{\cS}$ in case the latter $2$-category is finitary. Recall, from \cite{MM5},
that a simple transitive $2$-representation of $\underline{\cS}$ is a functorial action 
of  $\underline{\cS}$ on a small category $\mathcal{C}$
equivalent to $B$-proj, for some finite dimensional algebra $B$, such that  $\mathcal{C}$ has no 
proper $\underline{\cS}$-invariant ideals.

Examples of simple transitive $2$-representations of $\underline{\cS}$ include the so-called
{\em cell $2$-rep\-re\-sen\-ta\-ti\-on} $\mathbf{C}_{\mathcal{L}}$ associated to a left cell 
$\mathcal{L}$, cf. \cite{MM1,MM2}.

\begin{proposition}\label{prop3}
Assume that $\Gamma$ is an unlabeled tree. Then every 
simple transitive $2$-representations of $\underline{\cS}$ is equivalent to a
cell $2$-representation. 
\end{proposition}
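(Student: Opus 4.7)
The plan is as follows. First, since $\Gamma$ is an unlabeled tree, the three conditions of Proposition~\ref{propfinite} are automatically satisfied (it is a tree, has no labels at all, and in particular no label $\infty$), so $\mathcal{J}$ is finite, and by Proposition~\ref{prop2} the $2$-category $\underline{\cS}$ is finitary. Hence the classification question is well-posed. The key combinatorial input is then Corollary~\ref{corcard1}: $|\mathcal{L}_s\cap\mathcal{R}_t|=1$ for all $s,t\in S$. In particular, each indecomposable $1$-morphism in $\mathcal{J}$ is uniquely indexed by an ordered pair of simple reflections, and the Duflo involution in each left cell $\mathcal{L}_s$ is $s$ itself.

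Second, I would invoke the main result of \cite{MMMZ}, which is announced in the introduction as the engine of the argument. This transfer theorem allows the classification of simple transitive $2$-representations of a finitary $2$-category to be read off from the classification for a reference $2$-category sharing the same local cell combinatorics and the same multiplicative structure of indecomposable $1$-morphisms supported on the corresponding two-sided cell. In our setting, the singleton-intersection property, together with the fact that every rank-$2$ parabolic subsystem along an edge of $\Gamma$ is of type $A_2$ (hence finite and simply-laced), matches the local data of $\mathcal{J}$ with that of the small cell in a finite simply-laced Coxeter system. For such finite systems, \cite{KMMZ} classifies the simple transitive $2$-representations of the small quotient as precisely the cell $2$-representations, and this classification should transfer via \cite{MMMZ} to identify every simple transitive $2$-representation of $\underline{\cS}$ with some $\mathbf{C}_{\mathcal{L}_s}$.

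The main obstacle, in my view, will be verifying that the hypotheses of the \cite{MMMZ} transfer theorem apply uniformly in our setting, which includes possibly infinite ambient Coxeter systems such as $k$-stars with $k\geq 4$ or affine/hyperbolic unlabeled trees. Concretely, one must check that (i) the relevant $\mathcal{J}$-simplicity and adjointness hypotheses genuinely hold in $\underline{\cS}$, and (ii) the correspondence produced by the reduction preserves cell $2$-representations on both sides so that the conclusion of \cite{KMMZ} indeed transfers. The latter should be handled by the combinatorial matching of Corollary~\ref{corcard1}, together with Corollary~\ref{corcard3} and Proposition~\ref{propcard2}, which localise all required coincidences of structure constants to elementary rank-$2$ computations in type $A_2$. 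A secondary point worth noting is that when $W$ itself is infinite, the ambient $\cS$ is no longer finitary, so every invocation of the known classification must be routed through the finite small quotient $\underline{\cS}$ and rely only on the finiteness of $\mathcal{J}$, not of $W$.
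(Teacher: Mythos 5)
The paper's actual proof does \emph{not} use the MMMZ transfer theorem for this proposition. Once Corollary~\ref{corcard1} establishes that $|\mathcal{L}_s\cap\mathcal{R}_t|=1$ for all $s,t\in S$, the two-sided cell $\mathcal{J}$ is \emph{strongly regular}, and the paper applies \cite[Theorem~18]{MM5} directly (after verifying its hypotheses via \cite[Proposition~1]{MM6} and \cite[Corollary~19]{KM}): for a fiat $2$-category whose relevant two-sided cell is strongly regular, every simple transitive $2$-representation is a cell $2$-representation. This is self-contained and requires no comparison with any other Coxeter system. Your routing through the MMMZ reduction is the strategy the paper reserves for Theorem~\ref{thm5}, where $\Gamma$ has a labeled edge and $\mathcal{L}_s\cap\mathcal{R}_s$ is not a singleton.

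Beyond being a different route, your argument has a concrete gap. You propose to ``match the local data of $\mathcal{J}$ with that of the small cell in a finite simply-laced Coxeter system'' and then import the classification from \cite{KMMZ}. But an arbitrary unlabeled tree $\Gamma$ (e.g., a star with four or more leaves, or any tree supporting an infinite $W$) is not of finite Coxeter type, and the MMMZ reduction does not compare two different Coxeter systems; it only relates a fiat $2$-category to the $2$-subcategory supported on a diagonal $\mathcal{H}$-cell $\mathcal{L}_s\cap\mathcal{R}_s$. Here that $\mathcal{H}$-cell is a singleton $\{s\}$, so if one does apply MMMZ the resulting centralizer $2$-category is essentially trivial (one non-identity indecomposable $1$-morphism $B_s$ with $B_sB_s\cong B_s\oplus B_s$), and its classification must be supplied independently, not by appeal to \cite{KMMZ}. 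The clean way to avoid both issues is exactly the paper's: use the strong regularity directly via \cite[Theorem~18]{MM5}, which also makes your worry about ``infinite ambient Coxeter systems'' moot, since that theorem is stated for the finitary $2$-category $\underline{\cS}$ itself and does not reference $W$ being finite.
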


\begin{proof}
Thanks to Corollary~\ref{corcard1}, \cite[Proposition~1]{MM6} 
and \cite[Corollary~19]{KM}, in case  $\Gamma$ is an unlabeled tree, 
the $2$-category $\underline{\cS}$ satisfies the assumptions of \cite[Theorem~18]{MM5}
and hence the assertion of the proposition follows from \cite[Theorem~18]{MM5}.
\end{proof}

The case when $\Gamma$ is a tree with one labeled edge requires some preparation.
Assume that the labeled edge of gamma is
\begin{equation}\label{eq2}
\xymatrix{s\ar@{-}[r]^n&t}, 
\end{equation}
where $3<n<\infty$. Let $\widetilde{S}=\{s,t\}$ and $\widetilde{W}$ be the parabolic Coxeter subgroup of 
$W$ generated by $\widetilde{S}$. Let $\widetilde{\mathfrak{h}}$ be the $2$-dimensional subspace of $\mathfrak{h}$ 
generated by the unique (up to scalar) eigenvector of $s$ with eigenvalue $-1$ and
the unique (up to scalar) eigenvector of $t$ with eigenvalue $-1$. Then we have the corresponding
$2$-categories $\widetilde{\cS}^{\mathrm{gr}}$ and $\underline{\widetilde{\cS}}$. 

\begin{theorem}\label{thm5}
Assume that $\Gamma$ is a tree with one labeled edge of the form \eqref{eq2}.
Then there is a bijection between the sets of equivalence classes of simple transitive $2$-representations
of the $2$-categories $\underline{\cS}$ and $\underline{\widetilde{\cS}}$.
\end{theorem}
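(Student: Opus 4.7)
The plan is to apply the main theorem of \cite{MMMZ}, which, for a finitary $2$-category with a fixed apex $\mathcal{J}$, classifies simple transitive $2$-representations via coalgebra $1$-morphisms supported on the diagonal $\cH$-cells $\mathcal{L}_s\cap\mathcal{R}_s$. The heart of the proof will be to show that the diagonal data that feeds into this classification is the same for $\underline{\cS}$ and for $\underline{\widetilde{\cS}}$, so that the bijection is literally induced by the identification of these data.

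First, I would use Corollary~\ref{corcard3} to obtain that, for $p,q\in\{s,t\}$, the intersection $\mathcal{L}_p\cap\mathcal{R}_q$ in $W$ coincides with $\mathcal{L}_p^{\{s,t\}}\cap\mathcal{R}_q^{\{s,t\}}$ in $\widetilde{W}$. In particular, the indecomposable Soergel bimodules $B_w$ with $w$ in such an intersection can be computed in the parabolic subcategory, and after passing to small quotients this gives a full faithful comparison between the $\{s,t\}$-corner of $\underline{\cS}$ and the corresponding corner of $\underline{\widetilde{\cS}}$. This is the level at which the coalgebra $1$-morphism of \cite{MMMZ} lives.

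Second, I would use Proposition~\ref{propcard2} to analyze the remaining $\cH$-cells $\mathcal{L}_p\cap\mathcal{R}_q$ for $p,q\in S$ not both in $\{s,t\}$. The bijection $w\mapsto u_1wu_2$ described there, with $u_1$ and $u_2$ uniquely determined shortest paths in the unlabeled portion of $\Gamma$, tells us that these cells are rigidly generated from the $\{s,t\}$-diagonal by multiplication with fixed Soergel bimodules associated to the paths. Consequently, the additional $1$-morphisms in $\underline{\cS}\setminus\underline{\widetilde{\cS}}$ parametrize, but do not deform, the coalgebra data. One then checks that the natural construction sending a coalgebra $C$ in the $\{s,t\}$-diagonal to the corresponding comodule category is compatible with this rigid extension, giving a well-defined passage of $2$-representations in both directions: restriction along the inclusion of the parabolic subcategory, and induction via tensoring with the path $1$-morphisms.

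The main obstacle will be verifying that the hypotheses of the MMMZ reduction theorem are met in our specific non-strict, graded, and only partially finitary setup (recall $\cS^{\mathrm{gr}}$ itself is not finitary, and we work through the abelianization $\overline{\cS^{\mathrm{gr}}}$ of Subsection~\ref{s3.3}), and in particular that the coalgebra structures on the $\{s,t\}$-corners of $\underline{\cS}$ and $\underline{\widetilde{\cS}}$ truly match, including the comultiplication induced by the decompositions of $B_sB_s$, $B_tB_t$, and the longer products forced by the finite label $n$ on the edge $\{s,t\}$. Since these decompositions are entirely dictated by the Hecke relations within $\widetilde{W}$, and Corollary~\ref{corcard3} ensures that they survive intact in $\underline{\cS}$, the verification ultimately reduces to the already understood parabolic case, after which the bijection of equivalence classes follows.
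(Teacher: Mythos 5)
Your proposal follows essentially the same route as the paper: invoke the $\cH$-cell reduction theorem of \cite{MMMZ} for both $\underline{\cS}$ and $\underline{\widetilde{\cS}}$, and then use Corollary~\ref{corcard3} to identify the two resulting corner $2$-categories on the diagonal cell $\mathcal{L}_s\cap\mathcal{R}_s$. Note that your middle step invoking Proposition~\ref{propcard2} to show the off-diagonal cells ``do not deform'' the data is superfluous --- once \cite[Theorem~15]{MMMZ} is applied on each side, the only thing remaining to check is that the two corner quotient $2$-categories are biequivalent, which is exactly what Corollary~\ref{corcard3} (together with the definition of Soergel bimodules) delivers.
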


Note that, for $n\neq 12,18,30$, simple transitive $2$-representations of 
$\underline{\widetilde{\cS}}$ are classified in \cite[Theorem~1]{KMMZ}.
For $n=12,18,30$, simple transitive $2$-representations of 
$\underline{\widetilde{\cS}}$ are classified in \cite[Theorem~II]{MT} 
(with some classes of $2$-representations constructed in \cite[Theorem~1]{KMMZ}).

\begin{proof}
Our proof of this theorem is based crucially on an application of \cite[Theorem~15]{MMMZ}. 
Note that $\underline{\cS}$ is finitary by Propositions~\ref{propfinite} and \ref{prop2}.
Define $\cC$ as the quotient of the $2$-subcategory of $\underline{\cS}$ generated by 
$B_w$, where $w\in\mathcal{L}_s\cap\mathcal{R}_s$, modulo the unique maximal two-sided
$2$-ideal which does not contain any non-zero identity $2$-morphisms. By \cite[Theorem~15]{MMMZ},
there is a bijection between the sets of equivalence classes of simple transitive $2$-representations
of the $2$-categories $\underline{\cS}$ and $\cC$.

Define $\widetilde{\cC}$ as the quotient of the $2$-subcategory of $\underline{\widetilde{\cS}}$ generated by 
$B_w$, where $w\in\mathcal{L}_s\cap\mathcal{R}_s$, modulo the unique maximal two-sided
$2$-ideal which does not contain any non-zero identity $2$-morphisms. By \cite[Theorem~15]{MMMZ},
there is a bijection between the sets of equivalence classes of simple transitive $2$-representations
of the $2$-categories $\underline{\widetilde{\cS}}$ and $\widetilde{\cC}$.

At the same time, from Corollary~\ref{corcard3} and the definition of Soergel bimodules 
it follows easily that the $2$-categories $\cC$ and  $\widetilde{\cC}$ are biequivalent.
Therefore there is a bijection between the sets of equivalence classes of simple transitive $2$-representations
of the $2$-categories $\cC$ and $\widetilde{\cC}$. The claim follows.
\end{proof}

The best way to explicitly define a bijection given by Theorem~\ref{thm5} is to use 
the approach to simple transitive $2$-representations via (co)algebra objects as
developed in \cite{MMMT,MMMZ}.

\section{Quiver and relations for the underlying category}\label{s4}

\subsection{Zig-zag categories associated to graphs}\label{s4.1}

Let $\Omega$ be an unoriented graph without loops. 
Let $\widetilde{\Omega}$ be the oriented graph obtained from $\Omega$ by
replacing each unoriented edge of $\Omega$ by a pair of oppositely oriented edges 
between the same vertices. Here is an example:
\begin{displaymath}
\Omega=\xymatrix{1\ar@{-}[r]&2\ar@/^3pt/@{-}[r]\ar@/_3pt/@{-}[r]&3},\qquad
\widetilde{\Omega}=
\xymatrix{1\ar@/^3pt/[r]&2\ar@/^3pt/[l]\ar@/^7pt/[r]\ar@/_3pt/[r]&3\ar@/^7pt/[l]\ar@/_3pt/[l]}.
\end{displaymath}
We denote by $\mathcal{A}^{\Omega}$ the quotient of the path category of $\widetilde{\Omega}$
by the following relations:
\begin{itemize}
\item any path of length three is zero;
\item any path of length two between different vertices is zero;
\item all paths of length two that start and end at the same vertex are equal.
\end{itemize}
The category $\mathcal{A}^{\Omega}$ corresponds to the classical {\em zig-zag} algebra associated to 
$\widetilde{\Omega}$, cf. \cite{HK,Du,ET}. The category $\mathcal{A}^{\Omega}$ is graded by path lengths.
Note that the path algebra of $\mathcal{A}^{\Omega}$ is not unital if $\Omega$ has infinitely many vertices.

\subsection{Cell $2$-representations}\label{s4.2}

In this subsection we assume that $(W,S)$ is of any type. For a fixed $s\in S$,
consider an unoriented graph $\Lambda^{(s)}$ whose set of vertices is 
$\Lambda^{(s)}_0:=\mathcal{L}_s$ and whose set of unoriented arrows is 
\begin{displaymath}
\Lambda^{(s)}_1:= \{\{u,v\}\in \mathcal{L}_s\times\mathcal{L}_s\,:\,
u=tv>v, \text{ for some }t\in S\}.
\end{displaymath}
Note that the graph $\Lambda^{(s)}$ is an unlabeled tree which might be infinite.
We denote by $\mathcal{A}^{(s)}$ the category $\mathcal{A}^{\Lambda^{(s)}}$.
For example, if $\Gamma$ is the graph
\begin{displaymath}
\xymatrix{
&b\ar@{-}[d]&a\ar@{-}[d]&\\
r\ar@{-}[r]&s\ar@{-}[r]^4&t\ar@{-}[r]&c,
}
\end{displaymath}
then the associated graph $\Lambda^{(s)}$ is as follows:
\begin{equation}\label{edq7}
\xymatrix{
&bs\ar@{-}[d]&ats\ar@{-}[d]&cts\ar@{-}[dl]&bsts\ar@{-}[dl]\\
rs\ar@{-}[r]&s\ar@{-}[r]&ts\ar@{-}[r]&sts\ar@{-}[r]&rsts.
}
\end{equation}

\begin{proposition}\label{prop21}
The category $\mathcal{A}^{(s)}$ is isomorphic to the underlying category of the cell
$2$-representation $\mathbf{C}_{\mathcal{L}_s}$ of $\cS$ (and hence also of $\underline{\cS}$).
\end{proposition}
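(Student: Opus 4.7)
The plan is to realize the underlying category of $\mathbf{C}_{\mathcal{L}_s}$ via the construction used in the proof of Proposition~\ref{prop2}: for each $w \in \mathcal{L}_s$, set $M_w := B_w L_s$ inside $\overline{\cS^{\mathrm{gr}}}$. By standard cell $2$-representation theory (cf.\ \cite[Lemma~12]{MM1}, \cite{MM2}), each $M_w$ is indecomposable and the family $\{M_w\langle i\rangle : w \in \mathcal{L}_s,\, i \in \mathbb{Z}\}$ is a complete irredundant list of indecomposables of the underlying category, with the action of $B_t$ given by left multiplication. Thus the task reduces to computing the graded Hom spaces between the $M_w$'s and matching them with the morphism spaces in $\mathcal{A}^{(s)}$.

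For the Hom computation, I would use biadjointness to rewrite
\[
\operatorname{Hom}(M_u, M_v\langle k\rangle) \cong \operatorname{Hom}\bigl(B_{u^{-1}}B_v\, L_s,\, L_s\langle k\rangle\bigr)
\]
and then use that $L_s$ is the simple top of $B_s$ to reduce the problem to counting, with grading shifts, those summands of $B_{u^{-1}}B_v$ whose image survives in the small cell. By Proposition~\ref{prop1}\eqref{prop1.7}, the elements of $\mathcal{R}_s$ available as summand indices are severely restricted, and positivity of the grading on $\cP$ further bounds the relevant shifts.

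The main claims to verify are: (i) $\operatorname{End}(M_w)$ is two-dimensional, spanned by the identity in degree $0$ and a single loop in degree $2$; (ii) $\dim\operatorname{Hom}(M_u,M_v\langle k\rangle) = 1$ when $\{u,v\}$ is an edge of $\Lambda^{(s)}$ and $k=1$, and $0$ otherwise between distinct vertices; (iii) all Hom spaces vanish in degrees $\geq 3$. For (ii), if $v = tu > u$ then $B_t B_u \cong B_v \oplus (\text{lower Bott--Samelson terms})$, where the lower terms lie outside $\mathcal{R}_s$ and are annihilated by projection to $L_s$; the opposite map comes by biadjunction. The zig-zag relations (paths of length three vanish, length-two paths between distinct vertices vanish, and the two length-two loops at a common vertex agree up to a non-zero scalar) then reduce to computations inside the rank-two parabolic subcategory attached to each edge of $\Lambda^{(s)}$, which are standard dihedral Soergel calculus computations (\cite{EW}); any inessential scalars can be absorbed by rescaling the degree-$1$ generators.

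The main obstacle is (iii): controlling the vanishing of higher-degree Hom spaces uniformly, particularly when $\Lambda^{(s)}$ is infinite. Since one cannot appeal to a finite-dimensional coinvariant realization as in the finite case, the argument must combine the positive grading on $\cP$ with Proposition~\ref{prop1}\eqref{prop1.5}--\eqref{prop1.7} to bound which $B_w$-summands (with $w \in \mathcal{R}_s\cap\mathcal{L}_s$) of $B_{u^{-1}}B_v$ can occur in a prescribed degree, thereby ruling out all unwanted contributions.
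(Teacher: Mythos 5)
Your approach is genuinely different from the paper's, and the gap you flag at the end is the real obstruction; I do not think your proposed workaround is enough.

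The paper does not compute $\operatorname{Hom}(M_u, M_v\langle k\rangle)$ directly. Instead it works on the other side of the adjunction: using the formula $\underline{H}_t\underline{H}_w = \underline{H}_{tw}+\sum_{x<w,\,tx<x}\mu(x,w)\underline{H}_x$ (for $tw>w$), it reads off the multiplicity $m^{(t)}_{x,w}$ of $P_x$ as a summand of $B_t\cdot P_w$, which by adjunction equals the composition multiplicity $[B_t\cdot L_x : L_w]$. The decisive structural input — which your plan lacks — is that $B_t\cdot L_x$ is projective \emph{and} injective (\cite[Theorem~2]{KMMZ}); combined with the observation that the extreme graded pieces are $L_x\langle\pm 1\rangle$ (and everything else is in degree zero), this forces $B_t\cdot L_x\cong P_x\langle 1\rangle$. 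That isomorphism is what controls the whole graded Cartan matrix in one stroke: the degree-zero part is semisimple because $\cP$ is positively graded, so its composition factors are exactly the first radical layer of $P_x$, i.e.\ the arrows. Your task (iii) — bounding Hom in degrees $\ge 3$ uniformly even when $\Lambda^{(s)}$ is infinite — is solved by this single fact, and I do not see how "positivity plus Proposition~\ref{prop1}" alone rules out higher-degree maps in $\overline{\cS^{\mathrm{gr}}}$ without it.

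There is also a concrete error in your step (ii). You write that in $B_tB_u\cong B_{tu}\oplus(\text{lower terms})$ the lower summands "lie outside $\mathcal{R}_s$." That is not true in general: the indices $x$ with $\mu(x,u)\ne 0$, $x<u$, $tx<x$ can perfectly well lie in $\mathcal{L}_s$, so the corresponding $B_xL_s$ do not vanish. The paper does not claim they vanish; rather, it shows they contribute no arrows via a self-consistency argument: if $[B_t\cdot L_x : L_w]\ne 0$ with $w\ne rx$ for all $r$, then formula~\eqref{eq3} forces $x<w$; but symmetry of $\operatorname{Ext}^1$ for the self-injective underlying algebra (again using the projective-injective fact) forces $[B_{t'}\cdot L_w : L_x]\ne 0$ for the unique $t'$ with $t'w<w$, whence $w<x$ by the same formula — a contradiction. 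This clever Bruhat-order ping-pong is what your plan is missing as a replacement for the false claim about $\mathcal{R}_s$.

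Finally, matching the Cartan data to $\mathcal{A}^{(s)}$ by rescaling degree-one generators works only because $\Lambda^{(s)}$ is a \emph{tree}: one rescales inductively outward from $s$, and the absence of cycles guarantees the rescaling is consistent. You should make that point explicit; it is not automatic for an arbitrary graph.
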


\begin{proof}
Let $\mathcal{B}^{(s)}$ denote the underlying category of $\mathbf{C}_{\mathcal{L}_s}$.
For $w\in\mathcal{L}_s$, we denote by $P_w$ the indecomposable $\mathcal{B}^{(s)}$-projective
corresponding to $w$. We also denote by $L_w$ the simple top of $P_w$.

Recall (see e.g. \cite[Corollary~5.2.4]{Ir}) that, for $t\in S$ and  $w\in W$, we have
\begin{displaymath}
\underline{H}_t\underline{H}_w=
\begin{cases}
v\underline{H}_w+v^{-1}\underline{H}_w, & tw<w;\\
\displaystyle
\underline{H}_{tw}+\sum_{x<w,tx<x}\mu(x,w)\underline{H}_{x}, & tw>w;
\end{cases}
\end{displaymath}
where $\mu(x,w)$ is the {\em Kazhdan-Lusztig $\mu$-function}, see \cite[Subsection~2.2]{KL}.
By construction of $\mathbf{C}_{\mathcal{L}_s}$, this implies that, for $t\in S$ and $x,w\in\mathcal{L}_s$,
the multiplicity $m_{x,w}^{(t)}$ of $P_x$ as a direct summand of  $B_t\cdot P_w$ is given by
\begin{equation}\label{eq3}
m_{x,w}^{(t)}=
\begin{cases}
2, & x=w\text{ and } tw<w;\\
1, & x=tw>w;\\
\mu(x,w), & tw>w, x<w,tx<x;\\
0, & \text{ otherwise}.
\end{cases}
\end{equation}
In the graded picture, we additionally
have that, for $x=w$ and $tw<w$, the two summands $P_w$ appearing in $B_t\cdot P_w$ are, in fact,
$P_w\langle -1\rangle$ and $P_w\langle 1\rangle$ and all other appearing summands have no grading shift.
By adjunction, see e.g. \cite[Lemma~8]{AM}, $m_{x,w}^{(t)}$ coincides with the 
the composition multiplicity of $L_w$ in $B_t\cdot L_x$. 

Now, if $x\in \mathcal{L}_s$, then there is a unique $t\in S$ such that $tx<x$. 
The computation above implies that $B_t\cdot L_x$ has two simple subquotients isomorphic
to $L_x\langle -1\rangle$ and $L_x\langle 1\rangle$ while all other summands $L_w$ are in degree zero
and are killed by $B_t$. We have for such $w$
\begin{displaymath}
\mathrm{Hom}(B_t\cdot L_x,L_w) \cong \mathrm{Hom}(L_x,B_t\cdot L_w)=0,
\end{displaymath}
thus $L_x\langle 1\rangle$ is the simple top of $B_t\cdot L_x$.
Similarly, $L_x\langle -1\rangle$ is the simple socle of $B_t\cdot L_x$.
Note that the module $B_t\cdot L_x$ is both projective and injective by \cite[Theorem~2]{KMMZ}.
As we have just established that $B_t\cdot L_x$ has simple top, we obtain $B_t\cdot L_x\cong P_x\langle 1\rangle$.

As $\mathcal{P}$ is positively graded, the degree zero part of $B_t\cdot L_x$ is semi-simple. 
Hence the number of arrows from $x$ to $w$ in $\mathcal{B}^{(s)}$
coincides with the number of arrows from $w$ to $x$ in $\mathcal{B}^{(s)}$ and this also 
coincides with the multiplicity of $L_w$ in $B_t\cdot L_x$.
If $w=rx$, for some $r\in S$, then the multiplicity of $L_w$ in $B_t\cdot L_x$  is equal to $1$ 
by formula~\eqref{eq3}.

If the multiplicity  of $L_w$ in $B_t\cdot L_x$ is non-zero and $w\neq rx$, for any $r\in S$, then  $x<w$ by formula~\eqref{eq3}. 
In that case, the multiplicity of
$L_x$ in $B_{t'}\cdot L_w$, where $t'\in S$ is the unique element such that $B_{t'}\cdot L_w\neq 0$,
is also non-zero. Then $w<x$ by formula~\eqref{eq3} implying $w=x$, a contradiction.

To sum up, we established that the only $L_w$ which appear in $B_t\cdot L_x$ are those for which 
$w=rx$, for some $r\in S$, and these simples appear with multiplicity $1$. This implies that the
Cartan matrices of $\mathcal{B}^{(s)}$ and $\mathcal{A}^{(s)}$ coincide and now it is easy to 
construct inductively an explicit isomorphism between $\mathcal{B}^{(s)}$ and $\mathcal{A}^{(s)}$ by rescaling, 
if necessary, all arrows, starting the induction from the initial vertex $s$. The claim follows.
\end{proof}

\subsection{Rooted graphs and their pointed union}\label{ssrooted}

Here we introduce the notion of the one point union of rooted graphs, which we use in Subsection \ref{s4.3}.
By a {\em rooted graph}, we mean a pair $(\Xi,a)$ of a graph $\Xi$ and a {\em root} $a\in \Xi_0$.
Let $(\Xi,a)$ and $(\Xi',a')$ be two rooted graphs. The {\em one point union} $(\Xi,a)\vee(\Xi',a')$ of 
$(\Xi,a)$ and $(\Xi',a')$ is the graph obtained from the disjoint union 
$(\Xi,a)\coprod(\Xi',a')$ of $(\Xi,a)$ and $(\Xi',a')$ by 
identifying the roots $a$ and $a'$. The graph $(\Xi,a)\vee(\Xi',a')$ is naturally rooted
with the root being the identified vertex $a=a'$.
Here is an example, where $\bullet$ denotes a root and
$\circ$ an ordinary vertex:
\begin{displaymath}
\xymatrix{\circ\ar@{-}[rd]&&&&\circ\ar@/^3pt/@{-}[ld]\ar@/_3pt/@{-}[ld]&&
\circ\ar@{-}[rd]&&\circ\ar@/^3pt/@{-}[ld]\ar@/_3pt/@{-}[ld]\\
&\bullet&\vee&\bullet&&=&&\bullet&\\
\circ\ar@{-}[ru]&&&&\circ\ar@{-}[lu]&&\circ\ar@{-}[ru]&&\circ\ar@{-}[lu]} 
\end{displaymath}

\subsection{Simple transitive $2$-representations for finitary small quotients}\label{s4.3}

We assume that $(W,S)$ is indecomposable and that 
$\underline{\cS}$ is finitary. The purpose of the subsection is to describe the underlying category of each gradable simple transitive $2$-representation that is not covered in Subsection \ref{s4.2}. They still corresponds to the zig-zag algebras of  certain graphs. We explicitly determine this graph in terms of the Coxeter-Dynkin diagram $\Gamma$ of $(W,S)$.

If $\Gamma$ is an unlabeled tree, then 
any simple transitive $2$-representation of $\underline{\cS}$ is a cell $2$-representation
by Proposition~\ref{prop3} and its underlying category is described by 
Proposition~\ref{prop21}. 
Because of this, in what follows we assume that 
$\Gamma$ has a full subgraph of the form \eqref{eq2}.
 Then we have the
$2$-category $\underline{\widetilde{\cS}}$ as in Subsection~\ref{s3.5}.

Let $\mathbf{M}$ be a gradable simple transitive $2$-representation of $\underline{\cS}$
and $\mathbf{N}$ the corresponding simple transitive $2$-representation of $\underline{\widetilde{\cS}}$
given by Theorem~\ref{thm5}. From the proofs of Theorem~\ref{thm5} and
\cite[Theorem~15]{MMMZ} it follows that the underlying category of
$\mathbf{N}$ is isomorphic to a full subcategory of the underlying category of 
$\mathbf{M}$. As $\mathbf{N}$ is gradable, the underlying category of $\mathbf{N}$ is explicitly
described in \cite{KMMZ,MT} and is known to be of the form $\mathcal{A}^{\Omega}$,
where $\Omega$ is a simply laced Dynkin diagram. There is also an additional datum on $\Omega$,
given by considering $\Omega$ as a bipartite graph $\Omega_0=\Omega_0^{(s)}\coprod \Omega_0^{(t)}$
where, for $r\in\{s,t\}$, we have  $u\in \Omega_0^{(r)}$ if and only if
$B_r\cdot L_u\neq 0$.

By deleting the labeled edge $\{s,t\}$, the connected graph $\Gamma$ splits into a disjoint union
of two connected graphs: the graph $\Gamma^{(s)}$ which is the connected component containing $s$,
and the graph $\Gamma^{(t)}$ which is the connected component containing $t$. 
For $r\in\{s,t\}$, we denote by $S^{(r)}$ the set of vertices in $\Gamma^{(r)}$,
by $W^{(r)}$ the parabolic subgroup of $W$ generated by $S^{(r)}$, and by
$\underline{\cS^{(r)}}$ the small quotient $2$-category of Soergel bimodules associated with $(W^{(r)},S^{(r)})$.
Let $\mathcal{L}^{(r)}_r$ be the left cell in $W^{(r)}$
containing $r$. 

Note that $\Gamma^{(r)}$ is an unlabeled tree and hence any simple transitive 
$2$-representation of $\underline{\cS}^{(r)}$ is a cell $2$-representation
by Proposition~\ref{prop3} and its underlying category is described by 
Proposition~\ref{prop21}. The cell-2 representation for the left cell $\mathcal{L}^{(r)}_r$ is of the form $\mathcal{A}^{\Lambda^{(r)}}$, where $\Lambda^{(r)}$ is as in Subsection \ref{s4.2} with $(W,S)$ replaced by $(W^{(r)},S^{(r)})$ (which is in this case isomorphic to $\Gamma^{(r)}$).

We now construct a graph $\Theta$ via a sequence of pointed unions (see Subsection \ref{ssrooted}).
Consider $\Lambda^{(r)}$ as a rooted graph with root $r$. Letting $\Omega_0^{(s)}=\{u_1,u_2,\dots,u_p\}$
and $\Omega_0^{(t)}=\{w_1,w_2,\dots,w_q\}$,
\begin{itemize}
\item set $\Theta(0):=\Omega$ and consider it as a rooted graph with root $u_1$;
\item for $i$ from $1$ to $p$, define $\Theta(i)$ as the one point union of $\Lambda^{(s)}$
and $\Theta(i-1)$ and then change the root of $\Theta(i)$ to $u_{i+1}$, if $i<p$,
or to $w_1$, if $i=p$;
\item for $j$ from $1$ to $q$, define $\Theta(p+j)$ as the one point union of $\Lambda^{(t)}$
and $\Theta(p+j-1)$ and then change the root of $\Theta(p+j)$ to $w_{j+1}$, if $j<q$,
or forget the root if $j=q$;
\item define the graph $\Theta$ as the unrooted graph $\Theta(p+q)$.
\end{itemize}

Informally, the graph $\Theta$ is obtained from $\Omega$ by attaching at each vertex in 
$\Omega_0^{(s)}$ a copy of $\Lambda^{(s)}$ at $s\in \Lambda^{(s)}$ and attaching at each vertex in $\Omega_0^{(t)}$ a copy of $\Lambda^{(t)}$ at $t$. 

As an example, below we rearrange the graph~\eqref{edq7} such that  the solid part 
depicts $\Omega$, the dashed parts show the attached copies of $\Lambda^{(s)}$
and the dotted part shows the attached copy of $\Lambda^{(t)}$.
\begin{displaymath}
\xymatrix{
&ats&&cts&\\
&&ts\ar@{.}[lu]\ar@{.}[ru]&&\\
rs&s\ar@{-}[ru]\ar@{--}[l]\ar@{--}[d]&&sts\ar@{-}[lu]\ar@{--}[r]\ar@{--}[d]&bsts\\
&bs&&rsts&
}
\end{displaymath}


\begin{theorem}\label{thmmain2}
The category $\mathcal{A}^{\Theta}$ is isomorphic to the underlying category of the
$2$-rep\-re\-sen\-ta\-ti\-on $\mathbf{M}$ of $\underline{\cS}$. 
\end{theorem}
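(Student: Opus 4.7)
The plan is to bootstrap from the known description of the underlying category of $\mathbf{N}$ and then attach cell $2$-representations of the parabolic sub-$2$-categories $\underline{\cS^{(s)}}$ and $\underline{\cS^{(t)}}$. By Theorem~\ref{thm5} together with \cite[Theorem~15]{MMMZ}, the underlying category of $\mathbf{N}$ embeds as a full subcategory of the underlying category $\mathcal{C}$ of $\mathbf{M}$, and by \cite{KMMZ,MT} this subcategory is isomorphic to $\mathcal{A}^{\Omega}$ with simples labeled by $\Omega_0 = \Omega_0^{(s)}\sqcup\Omega_0^{(t)}$, the bipartition recording whether $B_s$ or $B_t$ acts non-trivially. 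I would denote the corresponding simples of $\mathcal{C}$ by $L_u$, $u\in\Omega_0$.

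For each $u\in\Omega_0^{(s)}$, I would then restrict the $2$-action of $\underline{\cS}$ to the parabolic sub-$2$-category $\underline{\cS^{(s)}}$ (generated by $B_r$, $r\in S^{(s)}$), and consider the $2$-subrepresentation $\mathcal{D}_u$ of $\mathcal{C}$ generated by $L_u$. Since $\Gamma^{(s)}$ is an unlabeled tree, Proposition~\ref{prop3} forces every simple transitive quotient of $\mathcal{D}_u$ to be a cell $2$-representation of $\underline{\cS^{(s)}}$, and the hypothesis $B_s\cdot L_u\neq 0$ identifies it as $\mathbf{C}_{\mathcal{L}_s^{(s)}}$. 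Proposition~\ref{prop21} then shows its underlying category is $\mathcal{A}^{\Lambda^{(s)}}$ with $L_u$ playing the role of the root vertex $s$. A parallel argument for $u\in\Omega_0^{(t)}$ attaches a copy of $\mathcal{A}^{\Lambda^{(t)}}$ at each such $L_u$. Combining these pieces with the embedded $\mathcal{A}^{\Omega}$ produces a collection of simples of $\mathcal{C}$ naturally indexed by $\Theta_0$.

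Finally, I would verify: (i) this collection is exhaustive; (ii) different attached copies are mutually disjoint apart from their root identifications with $\Omega_0$; and (iii) the morphism structure on $\mathcal{C}$ matches $\mathcal{A}^{\Theta}$. Step (i) follows because these simples already form a $\underline{\cS}$-stable collection and $\mathbf{M}$ is simple transitive. For (ii), the essential observations are that $B_r\cdot L_u=0$ for $r\in S^{(s)}\setminus\{s\}$ and $u\in\Omega_0^{(t)}$, and that coincidences between the $\underline{\cS^{(s)}}$-orbits of distinct $u,u'\in\Omega_0^{(s)}$ are excluded by the multiplicity formula~\eqref{eq3} applied within the cell $2$-representation. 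For (iii), the arrows of the quiver of $\mathcal{C}$ arise from applications of $B_r$ with $r\in S$, matching the edges of $\Omega$ on the ``core'' part and the edges of each $\Lambda^{(r)}$ on the attached parts; the zig-zag relations then follow from positivity of the grading on $\mathcal{P}$ together with the projective-injective structure of $B_r\cdot L_x$, exactly as in the proof of Proposition~\ref{prop21}. The desired isomorphism $\mathcal{A}^{\Theta}\cong \mathcal{C}$ is then constructed inductively by rescaling arrows starting from a chosen base vertex. The principal obstacle I anticipate is step (ii): ruling out unexpected coincidences among simples from distinct attached copies, which requires a careful combinatorial argument combining Proposition~\ref{prop1} with formula~\eqref{eq3} and the grading positivity of $\mathcal{P}$.
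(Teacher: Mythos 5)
Your overall strategy matches the paper's: first use the fact that $\mathbf{N}$ embeds as $\mathcal{A}^{\Omega}$ inside the underlying category $\mathcal{B}$ of $\mathbf{M}$, then attach copies of $\mathcal{A}^{\Lambda^{(s)}}$ and $\mathcal{A}^{\Lambda^{(t)}}$ by restricting to the parabolic sub-$2$-categories $\underline{\cS^{(s)}}$ and $\underline{\cS^{(t)}}$, and then argue that nothing else can happen. The paper also first runs the argument of Proposition~\ref{prop21} to conclude that $\mathcal{B}\cong\mathcal{A}^{\Theta'}$ for \emph{some} graph $\Theta'$, which is a cleaner starting point than your itemized checks (i)--(iii), but that difference is cosmetic.

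The substantive issue is your step~(ii), which you yourself flag as ``the principal obstacle.'' You propose to exclude unwanted interactions between the attached copies by ``a careful combinatorial argument combining Proposition~\ref{prop1} with formula~\eqref{eq3} and the grading positivity of $\mathcal{P}$.'' This does not suffice for the hardest case, namely ruling out an edge of $\Theta'$ between two attached copies of $\Lambda^{(s)}$ rooted at distinct $u_1,u_2\in\Omega_0^{(s)}$. Formula~\eqref{eq3} and the cell combinatorics of $\mathcal{J}$ describe the action of $B_r$ on a \emph{single} cell $2$-representation of $\underline{\cS^{(s)}}$, but they give no control over how two copies of that cell $2$-representation might be glued inside an ambient $2$-representation of the larger $\underline{\cS}$: both copies live in $\mathcal{L}_s^{(s)}$-type positions and an extra arrow between them is perfectly consistent with all the Cartan-matrix data coming from~\eqref{eq3}. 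The paper's proof closes this precisely by invoking \cite[Theorem~25]{CM}, which prohibits non-trivial discrete self-extensions of the cell $2$-representation of $\underline{\cS^{(s)}}$; this is a genuinely $2$-representation-theoretic input, not a combinatorial one, and it is absent from your plan. (For the cross-type case, i.e.\ an edge between a copy of $\Lambda^{(s)}$ and a copy of $\Lambda^{(t)}$, a reduced-expression argument inside $\mathcal{J}$ via adjunction does work, and that part of your sketch is fine.) Without the discrete-extensions input, the proposal is incomplete at its decisive step.
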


\begin{proof}
Let $\mathcal{B}$ be the the underlying category of the
$2$-rep\-re\-sen\-ta\-ti\-on $\mathbf{M}$ of $\underline{\cS}$.
Since $\mathcal{B}$ is graded, the same argument as the one used in the proof of 
Proposition~\ref{prop21} implies that $\mathcal{B}$ is isomorphic to 
$\mathcal{A}^{\Theta'}$, for some graph $\Theta'$. So, we just need to check
that this $\Theta'$ is isomorphic to $\Theta$ constructed above.

The subgraph of $\Theta'$ which is isomorphic to $\Omega$ is uniquely determined 
by the fact that $\mathbf{N}$ is isomorphic to a full subcategory of the underlying 
category of $\mathbf{M}$. See above. 

Take now some vertex $u\in\Omega^{(s)}_0$, viewed as a vertex of $\Theta'$ and consider
the additive closure $\mathcal{C}$ of all $B_w\cdot L_u$, where $w\in\mathcal{L}_s^{(s)}$.
The action of $\underline{\cS^{(s)}}$ preserves $\mathcal{C}$ by construction and it is
easy to see that the corresponding $2$-representation of $\underline{\cS^{(s)}}$
is the cell $2$-representation corresponding to $\mathcal{L}_s^{(s)}$. 
Note that the underlying category of this $2$-representation is isomorphic to 
$\mathcal{A}^{\Lambda^{(s)}}$. This argument
works for any $u\in\Omega^{(s)}_0$ and a similar argument 
(with $\mathcal{A}^{\Lambda^{(s)}}$ replaced by $\mathcal{A}^{\Lambda^{(t)}}$) 
works for any  $u\in\Omega^{(t)}_0$. Consequently, there is a natural embedding of 
$\Theta$ into $\Theta'$, and we are left to show that $\Theta'$ has no extra edges.

Let $u_1,u_2\in \Omega^{(s)}_0$ be two different vertices and $\mathcal{C}_1$ and
$\mathcal{C}_2$ the corresponding $2$-rep\-re\-sen\-ta\-ti\-ons of $\underline{\cS^{(s)}}$
constructed in the previous paragraph. Existence of an edge in $\Theta'$ connecting
a vertex in $\mathcal{C}_1$ and a vertex in $\mathcal{C}_2$ implies existence of a
non-trivial discrete self-extension for the cell $2$-representation of 
$\underline{\cS^{(s)}}$ in the sense of \cite[Subsection~5.2]{CM}.
This is, however, prohibited by \cite[Theorem~25]{CM}.

Let $u\in \Omega^{(s)}_0$ and $v\in \Omega^{(t)}_0$, and let $p$ be a vertex in the copy of $\Gamma^{(s)}$ in $\Theta$  attached to $u$ and $q$ be a vertex in the copy of $\Gamma^{(t)}$ attached to $v$.
Suppose there is an edge in $\Theta'$ between $p$ and $q$.
Letting $p'$ be the element in $\mathcal L _s$ corresponding to $p$ and $q'$ be the elements in $\mathcal L_t$ corresponding to $q$, we have \[\Hom_\mathcal{B}(B_{(q')^{-1}}B_{p'}L_u,L_v)\cong\Hom_\mathcal{B}(B_{p'}L_u,B_{q'}L_v)\supseteq \Hom_\mathcal{B}(P_p,P_q)\neq 0,\]
since $B_{p'}L_u$ contains $P_p$, the projective at $p$ in $\mathcal B$ and $B_{q'}L_v$ contains $P_q$.
 In particular, $B_{(q')^{-1}}B_{p'}L_u\neq 0$ and $(q')^{-1}p'\in\mathcal{J}$. Writing  $(q')^{-1}p'=txys$, where $x$ is in the parabolic subgroup generated by $S^{(t)}\setminus t$ and $y$ is in the parabolic subgroup generated by $S^{(s)}\setminus s$, we see that $(q')^{-1}p'\in\mathcal{J}$ implies $x=y=e$.
Therefore,
the edge between $p$ and $q$ comes from an edge in $\Omega$. This completes the proof.
\end{proof}

\vspace{2mm}

\noindent
H.~K.: Max Planck Institute for Mathematics, Vivatsgasse 7, 
D-53111, Bonn,\\ GERMANY, email:  {\tt hankyung\symbol{64}mpim-bonn.mpg.de}

\noindent
V.~M.: Department of Mathematics, Uppsala University, Box. 480,
SE-75106, Uppsala,\\ SWEDEN, email: {\tt mazor\symbol{64}math.uu.se}

\end{document}